\newtheorem{thm}{Theorem}[section]
\newtheorem{lma}[thm]{Lemma}
\theoremstyle{definition}
\theoremstyle{remark}
\newtheorem{preremark}{Remark}
\newtheorem{preex}{Example}
\numberwithin{equation}{section}
\begin{document}

\title{Tight maps and holomorphicity, exceptional spaces}


\author{Oskar Hamlet, Takayuki Okuda}

\address{Department of Mathematics\\Chalmers University of Technology and the University of Gothenburg\\412 96 G\"OTEBORG\\SWEDEN}

\email{hamlet@chalmers.se}

\address{Department of Mathematics, 
Graduate School of Science, Hiroshima University
1-3-1 Kagamiyama, Higashi-Hiroshima, HIROSHIMA 739-8526 JAPAN}

\email{okudatak@hiroshima-u.ac.jp}


\keywords{}

\fi


\begin{abstract}
We show that there are no tight nonholomorphic maps from irreducible domains into exceptional codomains, the only exception being the already known tight nonholomorphic maps from the Poincar\'e disc. This follows up on previous work by the first author where this was shown for classical codomains.
\end{abstract}

\maketitle

\section{Introduction}
Let $(\mathcal{X}_i,\omega_i)$, $i=1,2$, be Hermitian symmetric spaces of noncompact type paired with some choice of invariant K\"ahler forms. A map $\rho\colon\mathcal{X}_1\rightarrow\mathcal{X}_2$ is called \emph{totally geodesic} if the image of every geodesic in $\mathcal{X}_1$ is a geodesic in $\mathcal{X}_2$. A totally geodesic map $\rho\colon \mathcal{X}_ 1\rightarrow \mathcal{X}_2$ satisfies 
\begin{equation}\label{t23}
sup_{\Delta\in\mathcal{X}_1}\int_\Delta{\rho^*\omega_2}\leq sup_{\Delta\in\mathcal{X}_2}\int_\Delta{\omega_2}
\end{equation}
 where the supremum is taken over triangles with geodesic sides. We say that the map is \emph{tight} if equality holds in (\ref{t23}).

The motivation for the study of tight maps comes from a structure theorem due to Burger-Iozzi-Wienhard, according to which the action on a Hermitian symmetric space $\mathcal{X}$ coming from a maximal representation preserves a tightly embedded subspace $\mathcal{Y}\subset\mathcal{X}$, 
\cite{C10}. 
It is important to classify these tight inclusions since this narrows the possible images of maximal representations.

Tight maps were first introduced by Burger-Iozzi-Wienhard \cite{C8}. In the paper they investigated the properties of tight maps and classified tight maps from the Poincar\'e disc. Among these they found both holomorphic and nonholomorphic tight maps. They were however unable to find nonholomorphic tight maps from a domain of larger dimension. They expressed their belief that no such nonholomorphic tight maps existed. In a previous paper the first author proved this for classical codomains and also for a part of the cases with exceptional codomains:

\begin{thm}[\cite{C32}]\label{oldmain}
Let $\mathcal{X}_ 1$ and $\mathcal{X}_2$ be irreducible Hermitian symmetric spaces, where we assume that $\mathcal{X}_ 1$ is not the Poincar\'e disc. Assume further that
\begin{enumerate}
\item $\mathcal{X}_ 2$ is classical, or
\item $\mathcal{X}_2$ is the exceptional Hermitian symmetric space associated to the symmetric pair $(\mathfrak{e}_{6(-14)},\mathfrak{so}(10)+\mathbb{R})$ and $\mathcal{X}_1$ is of rank at least two.
\end{enumerate}
If $\rho\colon \mathcal{X}_ 1\rightarrow \mathcal{X}_2$ is a tight map, then it is (anti-) holomorphic. 
\end{thm}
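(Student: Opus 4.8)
\emph{Reduction to a Lie-algebra statement.}
A tight map is totally geodesic, so after conjugating by isometries it is induced by an injective homomorphism $\phi\colon\mathfrak g_1\to\mathfrak g_2$ between the Lie algebras of the isometry groups, compatible with fixed Cartan decompositions $\mathfrak g_i=\mathfrak k_i\oplus\mathfrak p_i$ (here $\mathfrak g_1$ is simple, as $\mathcal X_1$ is irreducible). Let $Z_i\in\mathfrak z(\mathfrak k_i)$ be the element with $J_i=\operatorname{ad}(Z_i)|_{\mathfrak p_i}$ the complex structure, put $T:=\phi(Z_1)\in\mathfrak k_2$, and normalise the $\omega_i$ so that $\|\kappa^b_{\mathcal X_i}\|=\pi\operatorname{rk}\mathcal X_i$ (harmless, tightness being scale-invariant). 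Irreducibility of $\mathcal X_1$ forces $\rho^*g_2=c\,g_1$ and $\rho^*\omega_2=\lambda\,\omega_1$ for constants $c>0$, $\lambda\in\mathbb R$, and after possibly post-composing $\rho$ with a fixed antiholomorphic isometry of $\mathcal X_1$ we may assume $\lambda\ge 0$. Two elementary facts: comparing $\omega_i=g_i(J_i\cdot,\cdot)$ with $\rho^*g_2=c\,g_1$ gives $\lambda\le c$, with equality iff $\phi(\mathfrak p_1)$ is $J_2$-invariant, i.e.\ iff $\rho$ is holomorphic, i.e.\ iff $\operatorname{ad}(T)$ and $\operatorname{ad}(Z_2)$ agree on $\phi(\mathfrak p_1)$ (equivalently $Z_2-T$ centralises $\phi(\mathfrak g_1)$, since $\phi(\mathfrak p_1)$ generates $\phi(\mathfrak g_1)$); and, identifying the left-hand side of \eqref{t23} with the Gromov norm of $\rho^*\kappa^b_{\mathcal X_2}=\lambda\,\kappa^b_{\mathcal X_1}$ and using the Domic--Toledo norm computation, tightness is equivalent to $\lambda=\operatorname{rk}\mathcal X_2/\operatorname{rk}\mathcal X_1$. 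Thus $\rho$ is tight and nonholomorphic precisely when the metric scaling $c$ strictly exceeds the rank ratio $\lambda=\operatorname{rk}\mathcal X_2/\operatorname{rk}\mathcal X_1$, and the theorem becomes: for $\operatorname{rk}\mathcal X_1\ge2$ and $\mathcal X_2$ as in (1) or (2) this cannot happen.

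\emph{Restriction to a maximal polydisc.}
The maximal polydisc $\mathcal P\cong\mathbb H^{r}\subset\mathcal X_1$, $r=\operatorname{rk}\mathcal X_1$, is tightly embedded, so $\rho|_{\mathcal P}$ is a tight map $\mathbb H^{r}\to\mathcal X_2$ (a composition of tight maps is tight, \cite{C8}). Applying the classification of tight maps from the Poincar\'e disc in \cite{C8} to the factor discs and to the diagonal disc of $\mathcal P$, and using the concrete model of $\mathcal X_2$ together with its Shilov boundary, one controls $\phi$ on the polydisc subalgebra $\mathfrak p(\mathcal P)\cong\mathfrak{su}(1,1)^{r}$: in particular $T=\phi(Z_{\mathcal P})$ is pinned down to a finite list of $\operatorname{Ad}(K_2)$-orbits, and $\operatorname{ad}(T)$ is forced to act on $\phi(\mathfrak g_1)$ with the same eigenvalue pattern $\{0,\pm i\}$ with which $\operatorname{ad}(Z_1)$ acts on $\mathfrak g_1$. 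This step alone does \emph{not} yield holomorphicity: tight nonholomorphic maps from polydiscs into (reducible, and with care into irreducible) targets do exist, so the extra structure coming from the rest of $\mathfrak g_1$ is essential.

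\emph{Using irreducibility.}
Because $r\ge2$ and $\mathcal X_1$ is irreducible, $\mathfrak p(\mathcal P)$ is a proper subalgebra and $\mathfrak g_1$ is generated by it together with the root vectors of the ``connecting'' roots $e_i\pm e_j$ (lying in $\mathfrak k_1$ for $e_i-e_j$ and in $\mathfrak p_1$ for $e_i+e_j$) and, in the non-tube case, the short roots $e_i$. As $\phi$ is a homomorphism, the images of these root vectors must realise the prescribed bracket relations with the images of the polydisc data and be $\operatorname{ad}(T)$-eigenvectors with the prescribed eigenvalues ($0$ on the $\mathfrak k_1$-part, $\pm i$ on the $\mathfrak p_1$-part); this heavily overdetermines $T$. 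One then carries out the case analysis over the codomains of type (1) or (2). For a classical $\mathcal X_2$ one works in the defining matrix representation of $\mathfrak g_2$: for each $\operatorname{Ad}(K_2)$-orbit of candidate $T$ surviving the previous step one writes out the $\operatorname{ad}(T)$-eigenspace decomposition of $\mathfrak g_2$, searches inside it for a subalgebra carrying the connecting-root relations, and checks that its existence forces $Z_2-T$ into the centraliser of $\phi(\mathfrak g_1)$, i.e.\ $\lambda=c$, contradicting nonholomorphicity. For $\mathcal X_2$ associated with $(\mathfrak e_{6(-14)},\mathfrak{so}(10)+\mathbb R)$ there is no convenient small faithful representation, so instead one enumerates the reductive Hermitian subalgebras of $\mathfrak e_{6(-14)}$ of rank at least two --- using the subalgebras $\mathfrak{so}(10)+\mathbb R$ and $\mathfrak{su}(6)+\mathfrak{su}(2)$ and $\mathfrak{sl}_2$-triple bookkeeping inside $\mathfrak e_6$ --- and performs the same verification on the resulting short list.

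\emph{Main obstacle.}
The crux is this last, case-by-case step: there is no uniform argument, and the exceptional codomain in particular requires careful combinatorics with nilpotent orbits and $\mathfrak{sl}_2$-triples in $\mathfrak e_6$. This is precisely why the leftover cases --- $\mathcal X_2$ associated with $(\mathfrak e_{6(-14)},\mathfrak{so}(10)+\mathbb R)$ together with $\mathcal X_1$ the Poincar\'e disc, and $\mathcal X_2$ associated with $(\mathfrak e_{7(-25)},\mathfrak e_6+\mathbb R)$ in all cases --- fall outside Theorem \ref{oldmain} and are the subject of the present paper; when $\operatorname{rk}\mathcal X_1\ge2$ the relevant Hermitian subalgebras of $\mathfrak e_{6(-14)}$ are few and large, which keeps the enumeration manageable. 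A secondary technical point is making the tightness criterion of the first paragraph fully explicit on $T$ within each matrix or exceptional model, which rests on the Domic--Toledo computation and on the structure theory of tight maps from polydiscs in \cite{C8}.
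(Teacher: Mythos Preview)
Theorem~\ref{oldmain} is not proved in this paper at all: it is quoted verbatim from \cite{C32}, and the present paper only \emph{uses} it (inside the proof of Theorem~\ref{main3}). There is therefore no in-paper proof to compare your attempt against.

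Assessed on its own, what you have written is a strategy, not a proof. The reduction in your first paragraph is essentially correct: irreducibility of $\mathcal X_1$ does give $\rho^*g_2=c\,g_1$ and $\rho^*\omega_2=\lambda\,\omega_1$; the inequality $|\lambda|\le c$ with equality precisely when $d\rho(\mathfrak p_1)$ is $J_2$-stable follows from the projection argument you sketch; and under your normalisation the Domic--Toledo computation does convert tightness into the rank-ratio condition on $\lambda$. But from that point onward the text only announces work and performs none of it. The sentence ``one then carries out the case analysis over the codomains'' is where the entire content of the theorem lives, and you do not carry it out; the claimed control of $T=\phi(Z_1)$ via the polydisc factors and the connecting-root vectors is asserted, not established; and in case~(2) you propose to ``enumerate the reductive Hermitian subalgebras of $\mathfrak e_{6(-14)}$ of rank at least two'' but neither perform that enumeration nor show how the conclusion would follow from it. You yourself flag this in your final paragraph (``the crux is this last, case-by-case step''), so you are aware that what precedes is a programme rather than an argument. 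As it stands, this cannot be accepted as a proof.
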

In this paper we finish the remaining exceptional cases, namely:

\begin{thm}\label{main3}
Let $\mathcal{X}_ 1$ and $\mathcal{X}_2$ be irreducible Hermitian symmetric spaces, where we assume that $\mathcal{X}_ 1$ is not the Poincar\'e disc. Assume further that
\begin{enumerate}
\item $\mathcal{X}_ 2$ is the exceptional Hermitian symmetric space associated to the symmetric pair $(\mathfrak{e}_{7(-25)},\mathfrak{e}_6+\mathbb{R})$, or
\item $\mathcal{X}_2$ is the exceptional Hermitian symmetric space associated to the symmetric pair $(\mathfrak{e}_{6(-14)},\mathfrak{so}(10)+\mathbb{R})$ and $\mathcal{X}_1$ is a Hermitian symmetric space of rank one.
\end{enumerate}
If $\rho\colon \mathcal{X}_ 1\rightarrow \mathcal{X}_2$ is a tight map, then it is (anti-) holomorphic. 
\end{thm}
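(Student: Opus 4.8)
\emph{Proof plan.} Since the tight maps considered here are by definition totally geodesic, the first step is to pass to Lie theory: after composing with isometries, $\rho$ is induced by a homomorphism $\phi\colon\mathfrak g_1\to\mathfrak g_2$ of the corresponding isometry Lie algebras which respects a compatible pair of Cartan decompositions $\mathfrak g_i=\mathfrak k_i\oplus\mathfrak p_i$, so that $\phi(\mathfrak k_1)\subseteq\mathfrak k_2$ and $\phi(\mathfrak p_1)\subseteq\mathfrak p_2$, with $\mathfrak g_2\in\{\mathfrak e_{7(-25)},\mathfrak e_{6(-14)}\}$. Let $Z_i\in\mathfrak z(\mathfrak k_i)$ be the characteristic element, so that $\operatorname{ad}(Z_i)|_{\mathfrak p_i}$ is the complex structure of $\mathcal X_i$. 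Because $\mathcal X_1$ is irreducible, $\mathfrak p_1$ is an irreducible $K_1$-module of complex type, hence the closed $G_1$-invariant $2$-form $\rho^{*}\omega_2$ is a real multiple $\lambda_\rho\,\omega_1$ of the K\"ahler form of $\mathcal X_1$. Then $\sup_{\Delta\subset\mathcal X_1}\int_\Delta\rho^{*}\omega_2=|\lambda_\rho|\sup_{\Delta\subset\mathcal X_1}\int_\Delta\omega_1$, and by the Clerc--{\O}rsted evaluation of such suprema, tightness of $\rho$ is exactly the condition that $|\lambda_\rho|$ attain the extremal value permitted by (\ref{t23}), a number fixed by $\operatorname{rk}\mathcal X_1$, $\operatorname{rk}\mathcal X_2$ and the chosen normalisations. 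I would also record the complementary fact that $\rho$ is holomorphic (resp.\ anti-holomorphic) precisely when $\operatorname{ad}(\phi(Z_1))$ agrees with $\operatorname{ad}(Z_2)$ (resp.\ with $-\operatorname{ad}(Z_2)$) on $\phi(\mathfrak p_1)$, equivalently when $\phi(Z_1)\mp Z_2$ lies in the centraliser $\mathfrak z_{\mathfrak g_2}(\phi(\mathfrak g_1))$. Throughout I would work with the reformulations from \cite{C8,C32} of both conditions in terms of the branching of $\mathfrak g_2$ (or of $\mathfrak p_2$) as a $\phi(\mathfrak g_1)$-module together with the image of a maximal polydisc of $\mathcal X_1$; in those terms tightness turns into a numerical condition involving the Dynkin index of $\phi$ and the highest weights appearing in the branching.

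The second step is to reduce the number of cases. If the image of $\rho$ lies in a proper tight subspace $\mathcal Y\subsetneq\mathcal X_2$ then $\rho\colon\mathcal X_1\to\mathcal Y$ is still tight, and when $\mathcal Y$ is classical, or is the $\mathfrak e_{6(-14)}$-space together with $\operatorname{rk}\mathcal X_1\ge 2$, Theorem~\ref{oldmain} already yields that $\rho$ is (anti-)holomorphic. In addition, when $\operatorname{rk}\mathcal X_1\ge 2$ I would restrict $\rho$ to a maximal polydisc $P\cong\mathbb D^{r_1}\subset\mathcal X_1$: the restriction remains tight, $\rho$ is (anti-)holomorphic if and only if $\rho|_P$ is, and the requirement that the $r_1$ disc components assemble into a single polydisc of $\mathcal X_2$ becomes rigid once $r_1\ge 2$. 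After these reductions the cases genuinely needing a new argument are: (a) $\mathcal X_1=\mathbb{CH}^n$ with $n\ge 2$ mapped tightly into $EVII$ or into $EIII$, and (b) tight maps of higher-rank domains into $EVII$ whose image is contained in no proper classical tight subspace.

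The third step handles these by a direct classification of the admissible $\phi$. Beginning from Dynkin's classification of reductive subalgebras of the complex exceptional Lie algebras $\mathfrak e_7$ and $\mathfrak e_6$, I would keep only those carrying a real form isomorphic to an irreducible Hermitian $\mathfrak g_1\neq\mathfrak{su}(1,1)$ and embedded compatibly with the Cartan involution and the characteristic element $Z_2$; this leaves a short list of isomorphism types for $\mathfrak g_1$ and, for each, finitely many conjugacy classes of embeddings. For every such embedding I would compute the decomposition of $\mathfrak p_2$ as a $\phi(\mathfrak g_1)$-module from the standard branching rules for $\mathfrak e_7\downarrow$ and $\mathfrak e_6\downarrow$ the relevant subalgebra, read off the Dynkin index and the relative position of $\phi(Z_1)$ and $Z_2$, and test whether $|\lambda_\rho|$ is extremal. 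The claim to be verified is that extremality forces $\phi(Z_1)\mp Z_2\in\mathfrak z_{\mathfrak g_2}(\phi(\mathfrak g_1))$, that is, forces $\rho$ to be (anti-)holomorphic; equivalently, every nonholomorphic totally geodesic $\rho$ is strictly sub-extremal and so fails to be tight.

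I expect the main obstacle to be twofold. First, one must be certain the list of Hermitian-type embeddings $\mathfrak g_1\hookrightarrow\mathfrak e_{7(-25)}$ (and $\mathfrak{su}(n,1)\hookrightarrow\mathfrak e_{6(-14)}$) is complete: Dynkin's classification is over $\mathbb C$, so one has to control both the descent to the pertinent real forms and the compatibility with the complex structure, distinguishing in particular regular subalgebras from $S$-subalgebras. Second --- and this is where the real difficulty lies --- the rank-one domains $\mathbb{CH}^n$ are exactly where the excluded phenomenon nearly occurs, since genuinely nonholomorphic tight maps do exist out of $\mathbb D=\mathbb{CH}^1$; for the small values of $n$ the $\mathfrak{su}(n,1)$-branchings of $\mathfrak p_2$ are rich enough that eliminating an extremal nonholomorphic embedding reduces to pinning down exactly which highest weights can occur, and it is this numerical step, more than the structure theory, that carries the proof.
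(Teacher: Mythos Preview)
Your outline is a plausible brute-force plan, but it diverges substantially from the paper's argument, and the step you yourself flag as decisive---the numerical check of extremality for each embedding on Dynkin's list---is never carried out. The paper avoids that computation entirely by two targeted tricks.

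For the $\mathfrak e_{6(-14)}$ case with $\mathfrak g_1=\mathfrak{su}(n,1)$, $n\ge 2$, the paper first shows (Lemma~\ref{mainlemma1}) that any tight nonholomorphic $\mathfrak{su}(1,1)\to\mathfrak e_{6(-14)}$ has a \emph{unique} complexification, whose weighted Dynkin diagram has a node of weight~$2$. Then, inside $\mathfrak{su}(n,1)$ one has both a tight holomorphic $\iota\colon\mathfrak{su}(1,1)\hookrightarrow\mathfrak{su}(n,1)$ and a second $\mathfrak{sl}_2$-homomorphism $\phi_\mathbb C$ with $\phi_\mathbb C(H)=2\,\iota_\mathbb C(H)$. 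If a tight nonholomorphic $\rho$ existed, $\rho\circ\iota$ would be tight nonholomorphic, hence have the known diagram, and $\rho_\mathbb C\circ\phi_\mathbb C$ would have that diagram doubled, producing a node of weight~$4$---impossible by Dynkin's $\{0,1,2\}$ constraint. No branching of $\mathfrak p_2$ is computed.

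For the $\mathfrak e_{7(-25)}$ case the paper does \emph{not} attempt to list embeddings at all. Instead (Lemma~\ref{mainlemma2}) it observes from Dynkin's tables that every $\mathfrak{sp}(4,\mathbb C)$ or $\mathfrak{sp}(6,\mathbb C)$ in $\mathfrak e_{7,\mathbb C}$ has centralizer of complex dimension at least~$4$; a short real-rank argument then produces a nonzero element of this centralizer inside $\mathfrak k$, and hence a proper simple Hermitian regular subalgebra $\mathfrak g'\subset\mathfrak e_{7(-25)}$ containing the image. Since every admissible tube-type $\mathfrak g_1$ of rank $2$ or $3$ receives a tight holomorphic $\mathfrak{sp}(4,\mathbb R)$ or $\mathfrak{sp}(6,\mathbb R)$, one gets a tight nonholomorphic $\mathfrak{sp}(2n,\mathbb R)\to\mathfrak g'$ with $\mathfrak g'$ classical or $\mathfrak e_{6(-14)}$, contradicting Theorem~\ref{oldmain}. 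Your reduction ``if the image lies in a proper tight subspace'' is exactly the conclusion of this lemma, but you treat it as a hypothesis that may or may not hold; the paper proves it always holds, and that is the whole point.

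Two smaller issues in your plan: restricting to a maximal polydisc $\mathbb D^{r_1}$ is not the right probe, since $\mathbb D^{r_1}$ is not irreducible and each factor is the Poincar\'e disc, precisely the case excluded from Theorem~\ref{oldmain}; the paper instead probes with the irreducible $\mathfrak{sp}(2r_1,\mathbb R)$. And your case~(a) need not treat $\mathbb{CH}^n\to EVII$ separately: $\mathfrak{su}(n,1)$ for $n\ge2$ is not of tube type, so Lemma~\ref{tube3} already rules out any tight embedding into $\mathfrak e_{7(-25)}$.
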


Theorem \ref{oldmain} and \ref{main3} together yields that there are no nonholomorphic tight maps $\rho\colon\mathcal{X}_1\rightarrow\mathcal{X}_2$ when $\mathcal{X}_1$ is an irreducible Hermitian symmetric space not isometric to the Poincar\'e disc. This shows that the classification of tight holomorphic maps in \cite{C31} together with the classification of (nonholomorphic) tight maps from the Poincar\'e disc in \cite{C8} yields a complete classification of tight maps from irreducible Hermitian symmetric spaces.

\section{Preliminaries}
In the coming subsections we gather the tools we need. Before we do that, let us set some notation and mention a few facts about equivalence.

Let $(\mathcal{X}_i,\omega_i)$, $i=1,2$, be Hermitian symmetric spaces of noncompact type paired with some choice of invariant K\"ahler forms.
Let $G_1$ (resp. $G_2$) denote the groups of holomorphic isometries of $\mathcal{X}_1$  (resp. $\mathcal{X}_2$). We say that two totally geodesic maps $\rho,\eta\colon\mathcal{X}_1\rightarrow\mathcal{X}_2$ are \emph{equivalent} if $\rho=g\circ\eta$ for some $g\in G_2$. Holomorphic isometries preserve the K\"ahler form of $\mathcal{X}_2$. The notion of tightness is thus well defined for equivalence classes of maps.

Totally geodesic maps $\rho\colon\mathcal{X}_1\rightarrow\mathcal{X}_2$ correspond to Lie algebra homomorphisms $\rho\colon\mathfrak{g}_1\rightarrow\mathfrak{g}_2$ between the Lie algebras of $G_1$ and $G_2$ \cite[p.~225]{C3}.
The corresponding notion of equivalence is that two homomorphisms $\rho,\eta\colon\mathfrak{g}_1\rightarrow\mathfrak{g}_2$
are \emph{equivalent} if they only differ by an inner automorphism of $\mathfrak{g}_2$.

We will frequently work with complex Lie algebras and complexifications of our Hermitian Lie algebras and the homomorphisms between them. We will denote by $\mathfrak{g}_\mathbb{C}$ either a complex Lie algebra or the complexification of a real Lie algebra $\mathfrak{g}$. In the same manner we will denote complex homomorphisms and complexifications of real homomorphisms by a subscript $\mathbb{C}$. It should be clear from the context which is meant. Two complex Lie algebra homomorphisms $\rho_\mathbb{C}, \eta_\mathbb{C}\colon\mathfrak{g}_\mathbb{C}\rightarrow\mathfrak{g}'_\mathbb{C}$ are said to be \emph{equivalent} if they only differ by a (complex) inner automorphism of $\mathfrak{g}'_\mathbb{C}$. The complexification of two equivalent homomorphisms $\rho, \eta\colon\mathfrak{g}\rightarrow\mathfrak{g}'$ are thus equivalent complex homomorphisms. The converse however, is not true. We will have to keep this in mind as most of the theory of complex Lie algebras frequently uses this type of equivalence.

We also fix matrix models for two Hermitian Lie algebras that will appear in the calculations. Let $M_{p,q}(\mathbb{C})$ denote the space of complex $p$ by $q$ matrices, $M_{p}(\mathbb{C}):=M_{p,p}(\mathbb{C})$, and define
\begin{flalign*}
&\mathfrak{su}(p,q):=
&\Big\{ 
\left(\begin{array}{cc}
A&B\\
B^*&C
\end{array}\right) 
:\, &A\in M_p(\mathbb{C}),B\in M_{p,q}(\mathbb{C}),C\in M_q(\mathbb{C}),&\\
& & & A^*=-A, C^*=-C, \mbox{tr}(A)+\mbox{tr}(C)=0\Big\},&
\end{flalign*}
\begin{align*}
&\mathfrak{sp}(2p,\mathbb{R}):=
&\Big\{ 
\left(\begin{array}{cc}
A&B\\
B^*&\bar{A}
\end{array}\right) 
:A\in M_p(\mathbb{C}),B\in M_{p}(\mathbb{C}), A^*=-A, B^t=B\Big\}.
\end{align*}
Using these realisations we see immediately that $\mathfrak{sp}(2p,\mathbb{R})$ is a subalgebra of $\mathfrak{su}(p,p)$. We call this inclusion the \emph{standard inclusion}.

\subsection{Totally geodesic maps and holomorphicity}

We will from this point only work with Lie algebra homomorphisms. We will often abuse the notions slightly and talk about tight and holomorphic Lie algebra homomorphisms when we mean that the corresponding totally geodesic map has these properties.

Given Cartan decompositions $\mathfrak{g}_1=\mathfrak{k}_1+\mathfrak{p}_1$ and $\mathfrak{g}_2=\mathfrak{k}_2+\mathfrak{p}_2$ we can assume that a homomorphism $\rho\colon\mathfrak{g}_1\rightarrow\mathfrak{g}_2$ satisfies $\rho(\mathfrak{k}_1)\subset\mathfrak{k}_2$ and $\rho(\mathfrak{p}_1)\subset~\mathfrak{p}_2$.
Under this assumption it is easier to determine which homomorphisms are holomorphic. Recall that there is an element $Z_1$ (resp. $Z_2$) in the center of $\mathfrak{k}_1$ (resp. $\mathfrak{k}_2$) that correspond to the complex structure of $\mathcal{X}_1$ (resp. $\mathcal{X}_2$). A homomorphism $\rho\colon\mathfrak{g}_1\rightarrow\mathfrak{g}_2$ corresponds to a holomorphic map if $\rho\circ \mbox{ad}(Z_1)=\mbox{ad}(Z_2)\circ\rho$. This condition is called (H1).

A stronger condition for $\rho$ that plays an important role when working with holomorphic homomorphisms is if $\rho(Z_1)=Z_2$, we then say that $\rho$ satisifies (H2).
The following theorem from \cite{C6} tells us how to decompose holomorphic homomorphisms into managable parts. 

\begin{thm}[\cite{C6}]\label{iharastructure}
Let $\rho\colon\mathfrak{g}_1\rightarrow\mathfrak{g}_2$ be a homomorphism satisfying (H1). Then there exists a Hermitian regular subalgebra $\mathfrak{g}_3\subset\mathfrak{g}_2$ such that $\rho(\mathfrak{g}_1)\subset~\mathfrak{g}_3$ and such that the restricted homomorphism $\rho|\colon\mathfrak{g}_1\rightarrow\mathfrak{g}_3$ satisfies (H2).
\end{thm}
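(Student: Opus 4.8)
The plan is to exhibit $\mathfrak{g}_3$ as the centraliser in $\mathfrak{g}_2$ of the element that measures the failure of (H2). First I would unwind (H1). Since $\rho$ is a homomorphism, $\rho(\mathrm{ad}(Z_1)X)=[\rho(Z_1),\rho(X)]$, so (H1) reads $[\rho(Z_1)-Z_2,\rho(X)]=0$ for all $X\in\mathfrak{g}_1$. Thus the element $W:=Z_2-\rho(Z_1)$ centralises $\rho(\mathfrak{g}_1)$. Because we have arranged $\rho(\mathfrak{k}_1)\subseteq\mathfrak{k}_2$ and $Z_1$ lies in the centre of $\mathfrak{k}_1$, both $\rho(Z_1)$ and $Z_2$ lie in $\mathfrak{k}_2$; as $Z_2$ is central in $\mathfrak{k}_2$ we get $[Z_2,W]=-[Z_2,\rho(Z_1)]=0$, so $Z_2$ and $\rho(Z_1)$ commute with $W$ as well.

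Next I would set $\mathfrak{g}_3:=\mathfrak{z}_{\mathfrak{g}_2}(W)$, the centraliser of $W$. By the previous paragraph $\rho(\mathfrak{g}_1)\subseteq\mathfrak{g}_3$ and $Z_2,\rho(Z_1)\in\mathfrak{g}_3$. To see that $\mathfrak{g}_3$ is regular, choose a maximal torus $\mathfrak{t}\subset\mathfrak{k}_2$ containing $W$; since Hermitian algebras have equal rank, $\mathfrak{t}$ is a (compact) Cartan subalgebra of $\mathfrak{g}_2$, and $\mathfrak{z}_{\mathfrak{g}_2}(W)$, being $\mathfrak{t}$-invariant, is the sum of $\mathfrak{t}$ with those root spaces $\mathfrak{g}_\alpha$ on which $\alpha(W)=0$ — a regular subalgebra. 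It inherits the Cartan decomposition $\mathfrak{g}_3=\mathfrak{k}_3+\mathfrak{p}_3$ with $\mathfrak{k}_3=\mathfrak{g}_3\cap\mathfrak{k}_2$ and $\mathfrak{p}_3=\mathfrak{g}_3\cap\mathfrak{p}_2$.

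The heart of the argument is that on $\mathfrak{g}_3$ the two operators coincide: since $\mathrm{ad}(W)$ vanishes on $\mathfrak{z}_{\mathfrak{g}_2}(W)$, we have $\mathrm{ad}(\rho(Z_1))|_{\mathfrak{g}_3}=\mathrm{ad}(Z_2-W)|_{\mathfrak{g}_3}=\mathrm{ad}(Z_2)|_{\mathfrak{g}_3}$. Because $Z_2\in\mathfrak{g}_3$, the subalgebra is $\mathrm{ad}(Z_2)$-invariant, so the eigenvalues of $\mathrm{ad}(Z_2)$ there are among $\{0,\pm i\}$, the only eigenvalues of the complex-structure element on all of $\mathfrak{g}_2$; moreover $\mathrm{ad}(Z_2)$ vanishes on $\mathfrak{k}_2\supseteq\mathfrak{k}_3$ and restricts to the complex structure (eigenvalues $\pm i$, no kernel) on $\mathfrak{p}_2\supseteq\mathfrak{p}_3$. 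Transporting this through the displayed equality, $\rho(Z_1)$ is central in $\mathfrak{k}_3$ and acts as a complex structure on $\mathfrak{p}_3$. Hence $\mathfrak{g}_3$ is a Hermitian subalgebra whose distinguished element is exactly $Z_3=\rho(Z_1)$, and the corestriction $\rho|\colon\mathfrak{g}_1\to\mathfrak{g}_3$ satisfies $\rho(Z_1)=Z_3$, i.e. (H2).

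The main obstacle I anticipate is the bookkeeping at the reductive/semisimple boundary: $\mathfrak{z}_{\mathfrak{g}_2}(W)$ is only reductive, so to land in a genuinely Hermitian (semisimple, noncompact-type) algebra I would pass to its derived subalgebra $[\mathfrak{g}_3,\mathfrak{g}_3]$. Here one uses that $\rho(\mathfrak{g}_1)$ is semisimple, being the image of a semisimple algebra, so its projection to the abelian centre of $\mathfrak{z}_{\mathfrak{g}_2}(W)$ is trivial and $\rho(\mathfrak{g}_1)\subseteq[\mathfrak{g}_3,\mathfrak{g}_3]$; in particular $\rho(Z_1)\in\rho(\mathfrak{g}_1)$ remains inside, and the derived subalgebra stays regular and retains $\rho(Z_1)$ as its complex-structure element. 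The remaining care is to confirm the non-degeneracy claim, namely that $\mathrm{ad}(\rho(Z_1))$ has no kernel on $\mathfrak{p}_3$, which is what promotes $\mathfrak{g}_3$ from merely reductive-with-a-grading to Hermitian; this follows from the inclusion $\mathfrak{p}_3\subseteq\mathfrak{p}_2$ together with the non-degeneracy of $\mathrm{ad}(Z_2)$ already on $\mathfrak{p}_2$.
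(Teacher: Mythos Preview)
The paper does not supply a proof of this statement; it is quoted from Ihara \cite{C6} and used as a black box. Your sketch is the natural argument and is, in outline, what Ihara does: the identity $\rho\circ\mathrm{ad}(Z_1)=\mathrm{ad}(Z_2)\circ\rho$ forces $W:=Z_2-\rho(Z_1)\in\mathfrak{k}_2$ to centralise $\rho(\mathfrak{g}_1)$, and on the centraliser of $W$ the operators $\mathrm{ad}(\rho(Z_1))$ and $\mathrm{ad}(Z_2)$ agree, so $\rho(Z_1)$ is forced to be the complex-structure element there.

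Two points worth tightening to match the paper's conventions. First, the paper's regular subalgebra $\mathfrak{g}_\mathbb{C}(\Delta')$ carries only the span of the coroots of $\Delta'$ as its Cartan part, so (as you note) one must pass from the reductive $\mathfrak{z}_{\mathfrak{g}_2}(W)$ to its derived algebra; it is then a one-line check that this derived algebra is exactly $\mathfrak{g}(\Delta')=\mathfrak{g}_\mathbb{C}(\Delta')\cap\mathfrak{g}_2$ for $\Delta'=\{\alpha\in\Delta:\alpha(W)=0\}$, which is what makes it a \emph{Hermitian regular subalgebra} in the sense of Theorem~\ref{ihara}. Second, this semisimple $\mathfrak{g}_3$ may contain compact simple ideals (components of $\Delta'$ with no noncompact root). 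On such a factor $\mathrm{ad}(Z_2)$ vanishes, hence so does $\mathrm{ad}(\rho(Z_1))$, and the component of $\rho(Z_1)$ there is central, i.e.\ zero; this is consistent with $Z_3$ having zero component on compact factors. With that said, the uniqueness of the element inducing a given adjoint action in a semisimple algebra gives $\rho(Z_1)=Z_3$, and your argument is complete.
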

We will take a closer look at regular subalgebras in section \ref{regsa}.
It should be remarked that this decomposition of an (H1)-homomorphism is not necessarily unique.

\subsection{Root systems}
Let $V$ be a real finite dimensional vector space equipped with a positive definite inner product $\langle\cdot,\cdot\rangle$. A finite subset $\Delta\subset~V$ is called 
a \emph{root system} if the following axioms are satisfied:
\begin{enumerate}
\item $\Delta$ spans $V$,
\item for all $\alpha\in\Delta$ the reflection $s_\alpha\colon V \rightarrow V$, $s_\alpha(\beta)=\beta-2\frac{\langle \alpha,\beta\rangle}{\langle \alpha,\alpha\rangle}\alpha$ preserves $\Delta$,
\item for any pair $\alpha,\beta\in\Delta$ the number $a_{\alpha,\beta}:=2\frac{\langle \alpha,\beta\rangle}{\langle \alpha,\alpha\rangle}$ is an integer,
\item for any $\alpha\in\Delta$ the only multiples of $\alpha$ that belongs to $\Delta$ are $\alpha$ and $-\alpha$.
\end{enumerate}
The reflections $s_\alpha$, $\alpha\in\Delta$, forms a group called the \emph{Weyl group}.
We say that a subset $\Gamma\subset\Delta$ is a \emph{basis} for $\Delta$ if 
\begin{enumerate}
\item $\Gamma$ is a basis of $V$,
\item any $\beta\in\Delta$ can be written as a sum $\beta=\sum_{\alpha\in\Gamma}{n_\alpha \alpha}$ where either all $n_\alpha$ are positive integers or all $n_\alpha$ are negative integers.
\end{enumerate}
Any two bases can be mapped to each other by an element in the Weyl group.
The \emph{Dynkin diagram} of a root system is defined by putting a node for each basis element and connecting the nodes corresponding to $\alpha,\beta$ by $a_{\alpha,\beta}a_{\beta,\alpha}$ edges. If there are roots of different lengths an arrow is placed on edges connecting a longer root to a shorter one. 

From a basis $\Gamma$ follows a notion of positivity for $\Delta$. We say that a root $\beta=\sum_{\alpha\in\Gamma}{n_\alpha \alpha}$ is positive if the $n_\alpha$ are. 
We can also start in the other end and construct a basis from a notion of positivity for $\Delta$.
This positivity should satisfy:
\begin{enumerate}
\item for all $\alpha \in\Delta$ either $\alpha>0$ or $-\alpha>0$,
\item if $\alpha,\beta> 0$ and $\alpha+\beta\in\Delta$ then $\alpha+\beta>0$.
\end{enumerate}
Given a notion of positivity we say that a positive root is \emph{simple} if it can not be written as a sum of two other positive roots. The set of simple roots then forms a basis for $\Delta$, \cite[p.~178]{C3}. 
A practical way of defining positivity is by using regular vectors. A vector $v\in V$ is called \emph{regular} if $\langle \alpha,v\rangle \neq 0$ for all $\alpha\in\Delta$. We define a notion of positivity from $v$ by saying that a root $\alpha$ is positive if $\langle \alpha,v\rangle>0$. Given a notion of positivity there is a regular vector $v$ defining that notion of positivity, \cite{C3}.

Let $\Delta'$ be a subset of $\Delta$ and $V'=\mbox{span}_\mathbb{R}(\Delta')$. We say that $\Delta'$ is a \emph{subroot system} of $\Delta$ if $(\Delta',V')$ is a root system. A subset $\Gamma'$ of $\Delta$ will define a basis for some subroot system $\Delta^{'}$ if
\begin{enumerate}
\item $\Gamma'$ is linearly independent in $V$,
\item if $\alpha,\beta\in \Gamma'$ then $\alpha-\beta\not\in \Delta$.
\end{enumerate}
Such a set $\Gamma'$ is called a \emph{Dynkin $\Pi$-system}. We obtain a subroot system $\Delta(\Gamma')$ from $\Gamma'$ by
$\Delta(\Gamma')=\mbox{span}_\mathbb{Z}(\Gamma')\cap\Delta$.

\begin{lma}\label{positivebasis}
Let $\Delta$ be a root system with a fixed notion of positivity. For any subroot system $\Delta'\subset\Delta$ there exists a basis that consists of roots that are positive (with respect to the notion of positivity of $\Delta$).
\end{lma}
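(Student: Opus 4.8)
The plan is to start with the notion of positivity on $\Delta'$ inherited from $\Delta$ (declare $\alpha \in \Delta'$ positive iff $\alpha > 0$ in $\Delta$) and show that this restricted positivity satisfies the two axioms required of a notion of positivity on the root system $\Delta'$. For axiom (1), every $\alpha \in \Delta' \subset \Delta$ satisfies either $\alpha > 0$ or $-\alpha > 0$ by the corresponding axiom for $\Delta$, and since $\alpha$ and $-\alpha$ both lie in $\Delta'$ when one does, the dichotomy holds inside $\Delta'$. For axiom (2), if $\alpha, \beta \in \Delta'$ are positive and $\alpha + \beta \in \Delta'$, then in particular $\alpha + \beta \in \Delta$, so $\alpha + \beta > 0$ in $\Delta$; restricting back to $\Delta'$, this means $\alpha + \beta$ is positive in $\Delta'$. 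Hence the restricted positivity is a genuine notion of positivity on $\Delta'$.

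Next I would invoke the fact quoted in the excerpt (\cite[p.~178]{C3}): given any notion of positivity on a root system, the set of simple roots — positive roots not expressible as a sum of two positive roots — forms a basis. Applying this to $\Delta'$ with its restricted positivity produces a basis $\Gamma'$ of $\Delta'$ consisting of roots that are positive in $\Delta'$, hence positive in $\Delta$ by the definition of the restricted positivity. This is exactly the basis claimed in the statement.

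Alternatively, and perhaps more transparently, one can use the regular-vector description: pick a regular vector $v \in V$ defining the positivity on $\Delta$, so $\langle \alpha, v \rangle \neq 0$ for all $\alpha \in \Delta$. Then $v$ is in particular regular for the subsystem $\Delta' \subset \Delta$ (the non-vanishing condition is only strengthened by passing to a subset), and after possibly projecting $v$ orthogonally onto $V' = \mathrm{span}_{\mathbb{R}}(\Delta')$ — which does not change any of the pairings $\langle \alpha, v \rangle$ for $\alpha \in \Delta' \subset V'$ — it defines a notion of positivity on $\Delta'$ whose positive roots are a subset of the positive roots of $\Delta$. The basis of simple roots for $(\Delta', V', v)$ then consists of $\Delta$-positive roots.

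There is essentially no serious obstacle here; the lemma is a routine consequence of the definitions together with the standard existence-of-basis theorem already cited. The only point requiring a moment's care is the verification that restricting the positivity is legitimate, i.e.\ that an element of $\Delta'$ cannot become "both positive and negative" or "neither" — this is immediate once one notes $\Delta'$ is closed under negation, which follows from it being a root system in its own span. I would therefore keep the write-up to a few lines, emphasizing the reduction to the cited existence result.
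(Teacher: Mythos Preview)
Your proposal is correct, and your alternative approach via a regular vector is precisely the paper's proof: the paper takes a regular $v$ defining the positivity on $\Delta$, projects it orthogonally to $V'$, observes that the resulting $v'$ is regular for $(\Delta',V')$ and induces a positivity on $\Delta'$ agreeing with that inherited from $\Delta$, and then takes the simple roots of $\Delta'$ with respect to $v'$ as the desired basis.

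Your primary approach---directly checking that the restricted positivity satisfies axioms (1) and (2) on $\Delta'$ and then invoking the existence of a simple-root basis---is a mild streamlining: it bypasses the regular vector entirely and avoids even mentioning the projection. Both routes are equally short and rely on the same cited fact from \cite[p.~178]{C3}; the paper's version has the small advantage that regularity of the projection makes the compatibility of positivities visibly automatic, while yours has the advantage of not introducing $v$ at all.
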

\begin{proof}
Let $v\in V$ be a regular vector defining the notion of positivity for $\Delta$. The orthogonal projection of $v$ onto $V'$ is then a regular vector $v'$ for the pair $(\Delta',V')$. The notion of positivity for $\Delta'$ defined by $v'$ agrees with that of $\Delta$. The set of simple roots now defines a basis for $\Delta'$ that consists of positive roots. 
\end{proof}

\subsection{Regular subalgebras}\label{regsa}
Let $\mathfrak{g}_\mathbb{C}$ be a complex semisimple Lie algebra with Killing form $\langle\cdot,\cdot\rangle$. Further let $\mathfrak{h}_\mathbb{C}\subset\mathfrak{g}_\mathbb{C}$ be a Cartan subalgebra and $\mathfrak{g}_\mathbb{C}=\mathfrak{h}_\mathbb{C}+\sum_{\alpha\in\Delta}{\mathfrak{g}_\alpha}$
a root space decomposition of $\mathfrak{g}_\mathbb{C}$. We define a real form $\mathfrak{h}_0$ of $\mathfrak{h}_\mathbb{C}$ by 
\[
\mathfrak{h}_0 := \{\, A \in \mathfrak{h}_\mathbb{C} :\, \alpha (A) \in \mathbb{R} \ 
\text{for any } \alpha \in \Delta \,\}.
\]
We can regard $\Delta$ as a subset of $\mathfrak{h}_0^{*}$. The Killing form of $\mathfrak{g}_\mathbb{C}$ restricts to a positive definite bilinear form on $\mathfrak{h}_0$ inducing a positive definite bilinear form on $\mathfrak{h}_0^{*}$. With respect to this inner product the pair $(\Delta, \mathfrak{h}_0^{*})$ is a root system.

A (semisimple) \emph{regular subalgebra} of $\mathfrak{g}_\mathbb{C}$ is a subalgebra derived from a subroot system $\Delta'$ of $\Delta$. The subroot system $\Delta'$ defines a subalgebra $\mathfrak{g}_\mathbb{C}(\Delta')=\mathfrak{h}'_\mathbb{C}+\sum_{\alpha\in\Delta'}{\mathfrak{g}_\alpha}$, where $\mathfrak{h}'_\mathbb{C}$ is the subalgebra of $\mathfrak{h}_\mathbb{C}$ spanned by the coroots of $\Delta'$.
We will sometimes describe the subroot system by a $\Pi$-system $\Gamma'$ and denote the corresponding regular subalgebra by $\mathfrak{g}_\mathbb{C}(\Gamma')$.

If $\mathfrak{g}_\mathbb{C}$ is the complexification of a Hermitian Lie algebra with Cartan decomposition $\mathfrak{g}=\mathfrak{k}+\mathfrak{p}$ we can choose a maximal abelian subalgebra 
$\mathfrak{h} \subset i\mathfrak{k}$.
As the complex structure $Z$ lies in the center of $\mathfrak{k}$ it will lie in the maximal abelian subalgebra $\mathfrak{h}$ of $\mathfrak{k}$.
The complexification $\mathfrak{h}_\mathbb{C}$ of $\mathfrak{h}$
is then a Cartan subalgebra of $\mathfrak{g}_\mathbb{C}$. Let $\mathfrak{g}_\mathbb{C}=\mathfrak{h}_\mathbb{C}+\sum_{\alpha\in\Delta}{\mathfrak{g}_\alpha}$ be a root space decomposition with respect to this Cartan subalgebra.
Since $\mbox{ad}(Z)(X)=0$ for any $X\in\mathfrak{k}$ and $\mbox{ad}(Z)^2(X)=-X$ for any $X\in\mathfrak{p}$ we can conclude that any $\mathfrak{g}_\alpha$ is contained in either $\mathfrak{k}_\mathbb{C}$ or $\mathfrak{p}_\mathbb{C}$. We call the root $\alpha$ compact in the first case and noncompact in the second. 
We equip $\Delta$ with a notion of positivity such that a noncompact root $\alpha$ is positive if $\alpha(Z)=i$.

\begin{thm}[\cite{C6}]\label{ihara}
Let $\mathfrak{g}$ be a Hermitan Lie algebra with Cartan decomposition $\mathfrak{g}=\mathfrak{k}+\mathfrak{p}$. Furher let $\mathfrak{h}\subset\mathfrak{k}$ be a maximal abelian subalgebra and $\mathfrak{g}_\mathbb{C}=\mathfrak{h}_\mathbb{C}+\sum_{\alpha\in\Delta}{\mathfrak{g}_\alpha}$ a root space decomposition of the complexification of $\mathfrak{g}$ with respect to the complexification of $\mathfrak{h}$.  
Let $\Delta$ be equipped with a notion of positivity as above and let $\Gamma'$ be a Dynkin $\Pi$-system satisfying further
\begin{enumerate}
\setcounter{enumi}{2}
\item each connected component of the Dynkin diagram of $\Gamma^{'}$ contains at most one positive noncompact root.\label{ma}
\end{enumerate}
Then the subalgebra $\mathfrak{g}(\Gamma'):=\mathfrak{g}_\mathbb{C}(\Gamma')\cap\mathfrak{g}$ is of Hermitian type and can be equipped with a unique complex structure such that the inclusion corresponds to a holomorphic map between the corresponding Hermitian symmetric spaces.
\end{thm}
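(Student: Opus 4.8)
The plan is to exhibit $\mathfrak{g}(\Gamma')$ as a real form of the complex regular subalgebra $\mathfrak{g}_\mathbb{C}(\Gamma')$ that is simultaneously stable under the Cartan involution, and then to read off its complex structure by restricting $\operatorname{ad}(Z)$. Let $\theta$ denote the Cartan involution of $\mathfrak{g}$ (so $\theta|_\mathfrak{k}=\mathrm{id}$, $\theta|_\mathfrak{p}=-\mathrm{id}$), extended complex-linearly to $\mathfrak{g}_\mathbb{C}$, and let $\sigma$ be the conjugation of $\mathfrak{g}_\mathbb{C}$ with respect to $\mathfrak{g}$. Since $\mathfrak{h}\subset\mathfrak{k}\subset\mathfrak{g}$, both $\theta$ and $\sigma$ preserve $\mathfrak{h}_\mathbb{C}$, with $\theta$ acting trivially there and $\sigma$ acting as the conjugation relative to $\mathfrak{h}$. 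Each $\mathfrak{g}_\alpha$ lies entirely in $\mathfrak{k}_\mathbb{C}$ or in $\mathfrak{p}_\mathbb{C}$, so $\theta(\mathfrak{g}_\alpha)=\mathfrak{g}_\alpha$; and since $\alpha$ takes purely imaginary values on $\mathfrak{h}$ one checks that $\sigma(\mathfrak{g}_\alpha)=\mathfrak{g}_{-\alpha}$. Because $\Delta(\Gamma')$ is stable under negation and the coroots of $\Delta(\Gamma')$ lie in $i\mathfrak{h}$, it follows that $\mathfrak{g}_\mathbb{C}(\Gamma')$ is both $\theta$- and $\sigma$-stable. Hence $\mathfrak{g}(\Gamma')=\mathfrak{g}_\mathbb{C}(\Gamma')\cap\mathfrak{g}$ is a real form of the semisimple Lie algebra $\mathfrak{g}_\mathbb{C}(\Gamma')$, and $\theta$ restricts to a Cartan involution of it, giving a Cartan decomposition $\mathfrak{g}(\Gamma')=\mathfrak{k}(\Gamma')\oplus\mathfrak{p}(\Gamma')$ with $\mathfrak{k}(\Gamma')=\mathfrak{g}(\Gamma')\cap\mathfrak{k}$ and $\mathfrak{p}(\Gamma')=\mathfrak{g}(\Gamma')\cap\mathfrak{p}$.

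Next I would build the complex structure. Since $[Z,\mathfrak{g}_\alpha]\subseteq\mathfrak{g}_\alpha$ and $[Z,\mathfrak{h}_\mathbb{C}]=0$, the operator $\operatorname{ad}(Z)$ preserves $\mathfrak{g}_\mathbb{C}(\Gamma')$; as $\sigma(Z)=Z$ it also preserves $\mathfrak{g}$, hence preserves $\mathfrak{g}(\Gamma')$, on which it acts as a derivation of a semisimple algebra. Thus $\operatorname{ad}(Z)|_{\mathfrak{g}(\Gamma')}=\operatorname{ad}(Z')$ for a unique $Z'\in\mathfrak{g}(\Gamma')$. Because $\operatorname{ad}(Z)$ acts diagonally on the $\mathfrak{g}_\alpha$ and annihilates $\mathfrak{h}'_\mathbb{C}$, the element $Z'$ centralises the Cartan subalgebra $\mathfrak{h}'_\mathbb{C}$ and hence lies in it; expanding $Z'$ in the coroots of $\Delta(\Gamma')$ and using that $\alpha(Z')=\alpha(Z)$ is purely imaginary for all $\alpha\in\Delta(\Gamma')$, together with the integrality of the Cartan matrix, one finds the coefficients purely imaginary, so in fact $Z'\in\mathfrak{h}\subset\mathfrak{k}$, i.e. $Z'\in\mathfrak{k}(\Gamma')$. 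Now $\operatorname{ad}(Z')=\operatorname{ad}(Z)$ annihilates $\mathfrak{k}\supseteq\mathfrak{k}(\Gamma')$, so $Z'$ is central in $\mathfrak{k}(\Gamma')$; and $\operatorname{ad}(Z')^2=\operatorname{ad}(Z)^2=-\mathrm{id}$ on $\mathfrak{p}\supseteq\mathfrak{p}(\Gamma')$, so $\operatorname{ad}(Z')$ is a complex structure on $\mathfrak{p}(\Gamma')$. Granting that $\mathfrak{g}(\Gamma')$ is of Hermitian type with $Z'$ as its complex structure, the inclusion $\iota\colon\mathfrak{g}(\Gamma')\hookrightarrow\mathfrak{g}$ satisfies $\iota\circ\operatorname{ad}(Z')=\operatorname{ad}(Z)\circ\iota$ by the very definition of $Z'$, i.e. it satisfies (H1), so the corresponding map of Hermitian symmetric spaces is holomorphic. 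Uniqueness then follows at once: any complex structure making $\iota$ holomorphic is given by a central $Z''\in\mathfrak{k}(\Gamma')$ with $\operatorname{ad}(Z'')=\operatorname{ad}(Z)|_{\mathfrak{g}(\Gamma')}=\operatorname{ad}(Z')$, and since $\mathfrak{g}(\Gamma')$ is semisimple this forces $Z''=Z'$.

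The step I expect to be the main obstacle is precisely the claim used above that $\mathfrak{g}(\Gamma')$ is of Hermitian type, and this is where hypothesis (3) enters. One has to analyse, component by component of the Dynkin diagram of $\Gamma'$, the simple summand of $\mathfrak{g}_\mathbb{C}(\Gamma')$ attached to $\Delta(\Gamma'_C)$ with the noncompact structure inherited from $Z$: since every root of $\Delta$, in particular every root of $\Delta(\Gamma'_C)$, takes a value in $\{0,\pm i\}$ on $Z$, the highest root of $\Delta(\Gamma'_C)$ severely restricts how the noncompact roots may sit inside $\Gamma'_C$, and condition (3) — at most one positive noncompact root in each component — is exactly what is needed to force each such summand into the Hermitian case, with $Z'$ generating the centre of its maximally compact subalgebra, respectively into the compact case when a component carries no noncompact root, contributing only a point. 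Once this local analysis is in place one assembles the summands, using Lemma~\ref{positivebasis} to keep the induced notions of positivity mutually compatible, and the theorem follows; everything outside this analysis is routine bookkeeping with the involutions $\theta$ and $\sigma$.
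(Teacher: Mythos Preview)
The paper does not prove this theorem; it is quoted from Ihara \cite{C6} and stated without proof. There is therefore no argument in the paper to compare your proposal against. What the paper \emph{does} prove is the immediately following theorem, which drops hypothesis~(\ref{ma}); that proof consists precisely in using Lemma~\ref{positivebasis} to replace an arbitrary $\Pi$-system $\Gamma'$ by a basis $\Gamma''$ of $\Delta(\Gamma')$ consisting of positive roots, checking that $\Gamma''$ automatically satisfies~(\ref{ma}), and then invoking Theorem~\ref{ihara} as a black box.

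Your outline is a reasonable sketch of how one might reprove Ihara's result: stability of $\mathfrak{g}_\mathbb{C}(\Gamma')$ under $\sigma$ and $\theta$, construction of $Z'$ via $\operatorname{ad}(Z)|_{\mathfrak{g}(\Gamma')}$, and the (H1) check are all correct and routine. Your identification of the real work---showing each simple factor is of Hermitian (or compact) type, with condition~(\ref{ma}) forcing at most one noncompact simple root per component---is also right. One small confusion: your invocation of Lemma~\ref{positivebasis} at the end is out of place. That lemma is the device the paper uses to \emph{remove} hypothesis~(\ref{ma}) in the next theorem; in the setting of Theorem~\ref{ihara} the positivity on $\Delta$ is fixed from the outset and condition~(\ref{ma}) is assumed, so no further adjustment of bases is needed when assembling the simple summands.
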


It turns out that the condition (\ref{ma}) is superfluous. 

\begin{thm}
Let $\mathfrak{g}$ be a Hermitan Lie algebra with Cartan decomposition $\mathfrak{g}=\mathfrak{k}+\mathfrak{p}$. Furher let $\mathfrak{h}\subset\mathfrak{k}$ be a maximal abelian subalgebra and $\mathfrak{g}_\mathbb{C}=\mathfrak{h}_\mathbb{C}+\sum_{\alpha\in\Delta}{\mathfrak{g}_\alpha}$ a root space decomposition of the complexification of $\mathfrak{g}$ with respect to the complexification of $\mathfrak{h}$. 
Let $\Gamma'$ be a Dynkin $\Pi$-system.
Then the subalgebra $\mathfrak{g}(\Gamma'):=\mathfrak{g}_\mathbb{C}(\Gamma')\cap\mathfrak{g}$ is of Hermitian type and can be equipped with a unique complex structure such that the inclusion corresponds to a holomorphic map between the corresponding Hermitian symmetric spaces.
\end{thm}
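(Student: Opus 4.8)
The plan is to show that the complex structure of $\mathfrak{g}(\Gamma')$ can always be produced by a single orthogonal projection of the complex structure element $Z$ of $\mathfrak{g}$ onto the relevant subspace of the Cartan subalgebra; the superfluous hypothesis~(\ref{ma}) was, in effect, only there to make the naive ``restrict $Z$'' construction work. First I would reduce to the case in which $\Gamma'$ is connected. Since $\Gamma'$ is a basis of the root system $\Delta(\Gamma')=\mathrm{span}_{\mathbb{Z}}(\Gamma')\cap\Delta$, two elements of $\Gamma'$ lying in distinct connected components of its Dynkin diagram are orthogonal, and, as every root has connected support, $\Delta(\Gamma')$ is the orthogonal disjoint union of the subsystems generated by the components. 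Consequently $\mathfrak{g}_{\mathbb{C}}(\Gamma')$ decomposes as a direct sum of ideals indexed by the components, a decomposition preserved by the conjugation $\sigma$ defining $\mathfrak{g}$ and hence inherited by $\mathfrak{g}(\Gamma')$, so it suffices to treat one component; from now on $\Gamma'$ is connected and $\mathfrak{g}_{\mathbb{C}}(\Gamma')$ simple.

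Next I would set up $\mathfrak{g}(\Gamma')$ as a real form with a Cartan decomposition. Because $\mathfrak{h}\subset\mathfrak{k}$, the conjugation $\sigma$ sends $\mathfrak{g}_{\alpha}$ to $\mathfrak{g}_{-\alpha}$ and the Cartan involution $\theta$ preserves each $\mathfrak{g}_{\alpha}$, acting by $+1$ on compact and $-1$ on noncompact root spaces. Together with $\Delta(\Gamma')=-\Delta(\Gamma')$ and the fact that $\mathfrak{h}'_{\mathbb{C}}$ is spanned by the $\sigma$- and $\theta$-stable coroots, this shows that $\mathfrak{g}_{\mathbb{C}}(\Gamma')$ is both $\sigma$- and $\theta$-stable. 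Hence $\mathfrak{g}(\Gamma')=\mathfrak{g}_{\mathbb{C}}(\Gamma')\cap\mathfrak{g}$ is a real form of the simple complex Lie algebra $\mathfrak{g}_{\mathbb{C}}(\Gamma')$, carrying the Cartan decomposition $\mathfrak{g}(\Gamma')=\mathfrak{k}'\oplus\mathfrak{p}'$ with $\mathfrak{k}'=\mathfrak{g}(\Gamma')\cap\mathfrak{k}$ and $\mathfrak{p}'=\mathfrak{g}(\Gamma')\cap\mathfrak{p}$, and a compact Cartan subalgebra $\mathfrak{h}'=\mathfrak{h}\cap\mathfrak{g}(\Gamma')\subset\mathfrak{k}'$ whose root system is exactly $\Delta(\Gamma')$; a root of $\Delta(\Gamma')$ is compact for $\mathfrak{g}(\Gamma')$ precisely when it is compact for $\mathfrak{g}$.

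The heart of the argument is the construction of the complex structure element. Put $\zeta:=-iZ\in\mathfrak{h}_{0}$, so that $\alpha(\zeta)\in\{0,\pm1\}$ for all $\alpha\in\Delta$, vanishing exactly on the compact roots. Let $\mathfrak{h}'_{0}\subset\mathfrak{h}_{0}$ be the real span of the coroots of $\Delta(\Gamma')$ and let $\zeta'$ be the orthogonal projection of $\zeta$ onto $\mathfrak{h}'_{0}$ with respect to the (positive definite) Killing form. The key observation is that for each $\alpha\in\Delta(\Gamma')$ the Killing dual $t_{\alpha}$ of $\alpha$ lies in $\mathfrak{h}'_{0}$, being proportional to the coroot of $\alpha$, so that $\alpha(\zeta-\zeta')=\langle t_{\alpha},\zeta-\zeta'\rangle=0$ and hence $\alpha(\zeta')=\alpha(\zeta)$ for every $\alpha\in\Delta(\Gamma')$. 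Setting $Z':=i\zeta'\in\mathfrak{h}'\subset\mathfrak{k}'$ we get $\alpha(Z')\in\{0,\pm i\}$ on $\Delta(\Gamma')$, vanishing exactly on the compact roots, so $\mathrm{ad}(Z')$ has eigenvalues $0,\pm i$ on $\mathfrak{g}_{\mathbb{C}}(\Gamma')$, with $0$-eigenspace $\mathfrak{k}'_{\mathbb{C}}$ and $(\pm i)$-eigenspaces defining $\mathfrak{p}'_{\pm}$; equivalently $\mathrm{ad}(Z')|_{\mathfrak{k}'}=0$ and $\mathrm{ad}(Z')^{2}|_{\mathfrak{p}'}=-\mathrm{id}$. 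Thus, when $\mathfrak{p}'\neq0$, the simple Lie algebra $\mathfrak{g}(\Gamma')$ is of Hermitian type with complex structure $Z'$; and if instead $\Delta(\Gamma')$ consists only of compact roots then $\mathfrak{g}(\Gamma')\subset\mathfrak{k}$ is compact and the statement is trivial. Finally, $\zeta-\zeta'$ annihilates every $\mathfrak{g}_{\alpha}$ with $\alpha\in\Delta(\Gamma')$ and commutes with $\mathfrak{h}_{\mathbb{C}}$, so $[Z',X]=[Z,X]$ for all $X\in\mathfrak{g}(\Gamma')$; this is condition (H1) for the inclusion $\mathfrak{g}(\Gamma')\hookrightarrow\mathfrak{g}$, which is therefore holomorphic. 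Uniqueness then follows from the standard fact that an invariant complex structure on a Hermitian symmetric space is determined up to a sign on each irreducible factor, holomorphicity of the inclusion fixing those signs.

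I expect the only genuine difficulty to be organizational rather than conceptual: one must check carefully that $\mathfrak{h}'=\mathfrak{h}\cap\mathfrak{g}(\Gamma')$ really is a compact Cartan subalgebra of $\mathfrak{g}(\Gamma')$ with root system $\Delta(\Gamma')$ and the expected compact/noncompact dichotomy, that $t_{\alpha}\in\mathfrak{h}'_{0}$ for $\alpha\in\Delta(\Gamma')$ so that the projection identity $\alpha(\zeta')=\alpha(\zeta)$ holds verbatim, and that the degenerate all-compact components are accounted for. The one new idea is simply that one should \emph{project} $Z$ onto $\mathfrak{h}'_{0}$ rather than attempt to restrict it; the naive restriction need not have the required eigenvalues, which is precisely the phenomenon hypothesis~(\ref{ma}) was designed to sidestep.
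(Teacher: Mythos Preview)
Your argument is correct but takes a genuinely different route from the paper's. The paper does not construct $Z'$ directly; instead it observes that by Lemma~\ref{positivebasis} one can replace $\Gamma'$ by another basis $\Gamma''$ of the same subroot system $\Delta(\Gamma')$ consisting entirely of positive roots, and then shows by a short contradiction argument that $\Gamma''$ automatically satisfies condition~(\ref{ma}): two positive noncompact simple roots $\beta_1,\beta_2$ in the same component connected by a string of compact simple roots $\alpha_1,\dots,\alpha_n$ would produce a root $\beta=\beta_1+\sum\alpha_i+\beta_2$ with $\beta(Z)=2i$, which is impossible. Since $\Delta(\Gamma')=\Delta(\Gamma'')$ and hence $\mathfrak{g}(\Gamma')=\mathfrak{g}(\Gamma'')$, Theorem~\ref{ihara} then applies verbatim. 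Your approach instead bypasses Ihara's theorem entirely, producing the complex structure element $Z'$ as the orthogonal projection of $Z$ onto $\mathfrak{h}'_0$ and verifying (H1) directly from the identity $\alpha(Z')=\alpha(Z)$ for $\alpha\in\Delta(\Gamma')$. The paper's argument is shorter, since it reduces immediately to an already-established black box; yours is more self-contained and exposes the mechanism behind Ihara's result, at the cost of re-deriving the Hermitian structure of $\mathfrak{g}(\Gamma')$ from scratch.
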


\begin{proof}
Let $\Gamma'\subset\Delta$ be a $\Pi$-system and $\Delta(\Gamma')\subset\Delta$ the subroot system it generates. By Lemma \ref{positivebasis} we can find a basis $\Gamma''$ of $\Delta(\Gamma')$ consisting of only positive roots. 
We claim that $\Gamma''$ satisfies (\ref{ma}). 

Suppose that the Dynkin diagram of $\Gamma''$ has more than one noncompact root in one of its components. Let $\beta_1, \beta_2$ be two such noncompact roots and $\alpha_1,...,\alpha_n$ a string of compact roots connecting them. Then $\beta=\beta_1+\sum_1^n{\alpha_i}+\beta_2$ lies in $\Delta(\Gamma'')$. But both $\beta_1$ and $\beta_2$ are positive noncompact roots which implies that $\beta(Z)=2i$. This is a contradiction.

Since $\Delta(\Gamma')=\Delta(\Gamma'')$ we have $\mathfrak{g}(\Gamma')=\mathfrak{g}(\Gamma'')$. Now the result follows by Theorem \ref{ihara}. 
\end{proof}
We call a subalgebra $\mathfrak{g}(\Gamma')\subset\mathfrak{g}$ constructed in this way a \emph{Hermitian regular subalgebra}.

\subsection{Weighted Dynkin Diagrams}
\newcommand{\Map}{\mathop{\mathrm{Map}}\nolimits}

Let $\mathfrak{g}_\mathbb{C}$ be a complex semisimple Lie algebra and fix a Cartan subalgebra $\mathfrak{h}_\mathbb{C}\subset\mathfrak{g}_\mathbb{C}$.
Let $\mathfrak{g}_\mathbb{C}=\mathfrak{h}_\mathbb{C}+\sum_\Delta\mathfrak{g}_\alpha$
be the root space decomposition with respect to $\mathfrak{h}_\mathbb{C}$.
We again consider the real form $\mathfrak{h}_0$ of $\mathfrak{h}_\mathbb{C}$,  
\[
\mathfrak{h}_0 := \{\, H \in \mathfrak{h}_\mathbb{C} :\, \alpha (H) \in \mathbb{R} \ 
\text{for any } \alpha \in \Delta \,\}.
\]
and regard $\Delta$ as a subset of $\mathfrak{h}_0^*$.
We fix a basis $\Gamma$ for $\Delta$.
Then a closed Weyl chamber 
\[
\mathfrak{h}_+ := 
\{\, H \in \mathfrak{h}_0 :\, \alpha (H) \geq 0 \ 
\text{for any } \alpha \in \Gamma) \,\}
\]
is a fundamental domain of $\mathfrak{h}_0$ 
for the action of the Weyl group $W$.
Then, for each $H \in \mathfrak{h}_0$, we can define a map 
\begin{align*}
\Psi_H \colon \Gamma \rightarrow \mathbb{R},\ \alpha \mapsto \alpha(H).
\end{align*}
We call $\Psi_H$ the \emph{weighted Dynkin diagram} corresponding to $H \in \mathfrak{h}_0$,
and $\alpha(H)$ the weight on a node $\alpha \in \Gamma$ of the weighted Dynkin diagram.
Since $\Gamma$ is a basis of $\mathfrak{h}_0^*$, the correspondence 
\begin{align*}
\Psi\colon \mathfrak{h}_0 \rightarrow \Map(\Gamma,\mathbb{R}),\ H \mapsto \Psi_H \label{eq:psi}
\end{align*}
is a linear isomorphism between real vector spaces. 
In particular, $\Psi$ is bijective.
Furthermore, 
\[
\Psi|_{\mathfrak{h}_+} \colon \mathfrak{h}_+ \rightarrow \Map(\Gamma,\mathbb{R}_{\geq 0}),\ H \mapsto \Psi_H
\]
is also bijective.

Let us fix a complex Lie algebra homomorphism 
$\iota_\mathbb{C} \colon \mathfrak{sl}(2,\mathbb{C}) \rightarrow \mathfrak{g}_\mathbb{C}$.
It is known that there exists a unique element $A_\iota$ in $\mathfrak{h}_+$ such that 
the adjoint orbit in $\mathfrak{g}_\mathbb{C}$ through $A_\iota$ meets $\iota_\mathbb{C}(\mathfrak{sl}(2,\mathbb{C}))$ 
at $\iota_\mathbb{C} \begin{pmatrix} 1& 0 \\ 0 & -1 \end{pmatrix}$.
We define the \emph{weighted Dynkin diagram of 
the homomorphism} $\iota \colon \mathfrak{sl}(2,\mathbb{C}) \rightarrow \mathfrak{g}_\mathbb{C}$ 
to be the weighted Dynkin diagram corresponding to $A_{\iota} \in \mathfrak{h}_+$.
The weighted Dynkin diagram determines a homomorphism up to equivalence:
\begin{thm}[\cite{C9}]\label{wdd}
Two homomorphisms $\rho_\mathbb{C},\eta_\mathbb{C}\colon\mathfrak{sl}(2,\mathbb{C}) \rightarrow \mathfrak{g}_\mathbb{C}$ are equivalent if and only if their weighted Dynkin diagrams coincide. Further, the entries of any Dynkin diagram belongs to the set $\{ 0,1,2\}$.
\end{thm}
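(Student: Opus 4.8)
The plan is to reduce the statement to the structure theory of $\mathfrak{sl}_2$-triples (Dynkin's classification). Write $E,H,F$ for the standard generators of $\mathfrak{sl}(2,\mathbb{C})$ and, given $\iota_\mathbb{C}$, set $e=\iota_\mathbb{C}(E)$, $A=\iota_\mathbb{C}(H)$, $f=\iota_\mathbb{C}(F)$; these satisfy the $\mathfrak{sl}_2$-relations, and $\mathrm{ad}(A)$ is semisimple with integer eigenvalues since $\mathfrak{g}_\mathbb{C}$ is a finite-dimensional $\mathfrak{sl}(2,\mathbb{C})$-module via $\iota_\mathbb{C}$ on which $H$ acts that way. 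As $A_\iota$ lies in the adjoint orbit of $A$, after conjugating $\iota_\mathbb{C}$ by an inner automorphism I may assume $A=A_\iota\in\mathfrak{h}_+$; then for $\alpha\in\Gamma$ the weight $\alpha(A_\iota)$ is the eigenvalue of $\mathrm{ad}(A_\iota)$ on $\mathfrak{g}_\alpha$, so $\alpha(A_\iota)\in\mathbb{Z}$, and $\alpha(A_\iota)\ge 0$ because $A_\iota\in\mathfrak{h}_+$.

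For the bound $\alpha(A_\iota)\le 2$, grade $\mathfrak{g}_\mathbb{C}=\bigoplus_j\mathfrak{g}(j)$ by the $\mathrm{ad}(A)$-eigenvalues; for $j\ne 0$, $\mathfrak{g}(j)=\bigoplus_{\beta(A)=j}\mathfrak{g}_\beta$, and every such $\beta$ is a positive root (it expands in $\Gamma$ with nonnegative coefficients, as $A\in\mathfrak{h}_+$). From the decomposition of $\mathfrak{g}_\mathbb{C}$ into irreducible $\mathfrak{sl}(2,\mathbb{C})$-submodules one sees that $\mathrm{ad}(e)\colon\mathfrak{g}(j)\to\mathfrak{g}(j+2)$ is onto for $j\ge 0$; since $e\in\mathfrak{g}(2)$, a downward induction then puts $\bigoplus_{j\ge 3}\mathfrak{g}(j)$ into the subalgebra generated by $\mathfrak{g}(1)+\mathfrak{g}(2)$, which is a sum of root spaces $\mathfrak{g}_\gamma$ with $\gamma$ a sum of positive roots each of value $1$ or $2$ on $A$. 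If some simple $\alpha$ had $\alpha(A_\iota)\ge 3$, then $\mathfrak{g}_\alpha$ would lie there, so $\alpha=\beta_1+\dots+\beta_k$ with all $\beta_i>0$; expanding in $\Gamma$, using that $\alpha$ is simple and that the only root multiples of $\alpha$ are $\pm\alpha$, forces $k=1$ and $\alpha(A_\iota)\in\{1,2\}$ — a contradiction. So the weights lie in $\{0,1,2\}$.

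For the equivalence, one direction is immediate from the definition: if $\rho_\mathbb{C}=\mathrm{Ad}(g)\circ\eta_\mathbb{C}$ then $\rho_\mathbb{C}(H)$ and $\eta_\mathbb{C}(H)$ lie in one adjoint orbit, so $A_\rho=A_\eta$ by uniqueness and the diagrams agree. Conversely, suppose $\Psi_{A_\rho}=\Psi_{A_\eta}$; since $\Psi$ is injective, $A_\rho=A_\eta=:A$, and after conjugation I may take both triples to be $(e_1,A,f_1)$ and $(e_2,A,f_2)$ (if $A=0$ both maps are zero and we are done, so assume $e_i\ne 0$). The orbit map $Z_{G_\mathbb{C}}(A)\to\mathfrak{g}(2)$, $z\mapsto\mathrm{Ad}(z)e_i$, has differential $X\mapsto[X,e_i]$ at the identity, with image $[\mathfrak{g}(0),e_i]=\mathfrak{g}(2)$ (using $\mathfrak{g}(0)=\mathrm{Lie}\,Z_{G_\mathbb{C}}(A)$ and the surjectivity above); hence each orbit is open in the irreducible variety $\mathfrak{g}(2)$, so the two orbits meet and $e_2=\mathrm{Ad}(z)e_1$ for some $z\in Z_{G_\mathbb{C}}(A)$. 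Then $\mathrm{Ad}(z)f_1$ and $f_2$ both complete the pair $(e_2,A)$ to an $\mathfrak{sl}_2$-triple, and $\mathrm{Ad}(z)f_1-f_2\in\mathfrak{z}_{\mathfrak{g}_\mathbb{C}}(e_2)\cap\mathfrak{g}(-2)=0$ since $\mathfrak{z}_{\mathfrak{g}_\mathbb{C}}(e_2)$ is spanned by $\mathfrak{sl}_2$-highest-weight vectors and hence lies in $\bigoplus_{j\ge 0}\mathfrak{g}(j)$. Thus $\mathrm{Ad}(z)$ carries the $\rho$-triple to the $\eta$-triple, i.e.\ $\mathrm{Ad}(z)\circ\rho_\mathbb{C}=\eta_\mathbb{C}$, and the homomorphisms are equivalent.

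The formal skeleton rests on two non-formal inputs, both from $\mathfrak{sl}(2,\mathbb{C})$-representation theory applied to $\mathfrak{g}_\mathbb{C}$: the surjectivity of $\mathrm{ad}(e)\colon\mathfrak{g}(j)\to\mathfrak{g}(j+2)$ for $j\ge 0$, and the fact that $\mathfrak{z}_{\mathfrak{g}_\mathbb{C}}(e)$ has only nonnegative $\mathrm{ad}(A)$-weights. The main obstacle is the converse direction — this is Kostant's uniqueness theorem for $\mathfrak{sl}_2$-triples — where one must also invoke the algebraic-group fact that $Z_{G_\mathbb{C}}(A)$ is connected (it is the centralizer of a torus in the connected group $G_\mathbb{C}$), so that its orbit through $e_i$ is genuinely the dense open subset of $\mathfrak{g}(2)$ and the two orbits are forced to coincide.
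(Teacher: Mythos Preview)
The paper does not give its own proof of this theorem: it is quoted from Dynkin \cite{C9} and used as a black box. So there is no argument in the paper to compare against.

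Your sketch is essentially the classical Dynkin--Mal'cev--Kostant proof and is correct. A few remarks. Your route to the bound $\alpha(A_\iota)\le 2$ is a little unusual: the more common argument applies the injectivity of $\mathrm{ad}(e)\colon\mathfrak{g}(j)\to\mathfrak{g}(j+2)$ for $j\le -1$ to $\mathfrak{g}_{-\alpha}$ and then observes that $[e,\mathfrak{g}_{-\alpha}]\subset\bigoplus_{\beta>0,\ \beta(A)=2}\mathfrak{g}_{\beta-\alpha}$ forces $\beta=\alpha$ for some $\beta$, hence $\alpha(A)=2$. Your version, producing an expression $\alpha=\beta_1+\cdots+\beta_k$ with positive $\beta_i$ and then using simplicity of $\alpha$, is a valid alternative; just note that your parenthetical ``for $j\ne 0$'' should read ``for $j>0$'' when you assert that the contributing roots are positive. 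In the equivalence direction, connectedness of $Z_{G_\mathbb{C}}(A)$ is not actually needed: once both orbits through $e_1$ and $e_2$ are open in the irreducible affine space $\mathfrak{g}(2)$, they must intersect and hence coincide, regardless of components. With these minor adjustments your argument stands as a faithful account of the result the paper is citing.
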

Weighted Dynkin diagrams will be presented by the Dynkin diagram of $\Gamma$ with the number $\alpha(A_\iota)$ next to each $\alpha\in\Gamma$.

\subsection{Properties of tight homomorphisms}
We will not deal with criterions for tightness directly but rather use previous classification results and known properties of tight maps to derive contradictions to the existence of nonholomorphic tight maps. We begin with a lemma that deals with compositions of tight maps.

\begin{lma}\label{compose}
Let $\rho\colon\mathfrak{g}_1\rightarrow\mathfrak{g}_2$ and $\eta\colon\mathfrak{g}_2\rightarrow\mathfrak{g}_3$ be homomorphisms between Hermitian Lie algebras where $\mathfrak{g}_2$ is simple. Then $\eta\circ \rho$ is tight if and only if $\eta$ and $\rho$ are tight.
\end{lma}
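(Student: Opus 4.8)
The plan is to characterize tightness of a totally geodesic map through its effect on the \emph{Toledo-type invariant}, i.e.\ the pullback of the K\"ahler form, and then exploit additivity of this invariant under composition together with the hypothesis that $\mathfrak{g}_2$ is simple. Concretely, recall (this is the standard reformulation of tightness used in \cite{C8}) that for a totally geodesic $\rho\colon\mathfrak{g}_1\to\mathfrak{g}_2$ one may attach a number $\lambda(\rho)\in[0,1]$ measuring the ratio of the left-hand side of (\ref{t23}) to the right-hand side, or equivalently the coefficient with which $\rho^*\omega_2$ is a positive multiple of $\omega_1$ after normalizing the K\"ahler forms so that the maximal triangle areas agree; $\rho$ is tight precisely when this coefficient equals $1$. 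Since $\mathfrak{g}_2$ is simple, $\mathcal{X}_2$ is irreducible, so $\omega_2$ is determined up to scale and $\rho^*\omega_2$ is automatically a (nonnegative) multiple of $\omega_1$; this is what makes the single number $\lambda(\rho)$ well defined and is where simplicity of $\mathfrak{g}_2$ is essential. First I would set up this normalization carefully, fixing invariant metrics so that the maximal triangle area is the same constant in each of $\mathcal{X}_1,\mathcal{X}_2,\mathcal{X}_3$.

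Next I would establish the multiplicativity $\lambda(\eta\circ\rho)=\lambda(\eta)\cdot\lambda(\rho)$. This follows because $(\eta\circ\rho)^*\omega_3=\rho^*(\eta^*\omega_3)$, and $\eta^*\omega_3=\lambda(\eta)\,\omega_2$ by definition of $\lambda(\eta)$ (again using that $\mathcal{X}_2$ is irreducible so $\eta^*\omega_3$ is a scalar multiple of $\omega_2$), while $\rho^*\omega_2=\lambda(\rho)\,\omega_1$. Pulling back the scalar relation gives $(\eta\circ\rho)^*\omega_3=\lambda(\eta)\lambda(\rho)\,\omega_1$, and comparison with the normalization of $\omega_1,\omega_3$ identifies the product with $\lambda(\eta\circ\rho)$. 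The only subtlety is to check that the maximal-triangle normalization is consistent under composition, i.e.\ that the constant appearing for $\mathfrak{g}_1\to\mathfrak{g}_3$ is indeed the product of the two constants; this is immediate once all three spaces are normalized to a common maximal triangle area.

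With multiplicativity in hand, the lemma is formal. If $\eta$ and $\rho$ are both tight then $\lambda(\eta)=\lambda(\rho)=1$, hence $\lambda(\eta\circ\rho)=1$, so $\eta\circ\rho$ is tight. Conversely, if $\eta\circ\rho$ is tight then $\lambda(\eta)\lambda(\rho)=1$; since each factor lies in $[0,1]$ this forces $\lambda(\eta)=\lambda(\rho)=1$, i.e.\ both $\eta$ and $\rho$ are tight. (One should note that tightness is never vacuous here, as the relevant maximal triangle areas are strictly positive, so $\lambda$ genuinely takes values in $[0,1]$ and the factorization argument is legitimate.)

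The main obstacle is not the algebra of the composition step but making the definition of $\lambda$ and the claim ``$\eta^*\omega_3$ is a multiple of $\omega_2$'' rigorous: one must invoke that an invariant $2$-form on an irreducible Hermitian symmetric space is a scalar multiple of the K\"ahler form, and one must know that the supremum in (\ref{t23}) is attained and equals a positive constant depending only on $\mathcal{X}_2$ (this is the content of the Gromov/Dupont-type bound, see \cite{C8}). Once these are quoted, the argument above goes through cleanly; the simplicity of $\mathfrak{g}_2$ is used exactly once, to guarantee irreducibility of the intermediate space and hence the scalar relation for $\eta^*\omega_3$.
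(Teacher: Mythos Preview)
The paper does not actually prove this lemma; it simply writes ``For a proof see \cite{C31}.'' So there is no in-paper argument to compare against, and I can only evaluate your proposal on its own merits.

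Your approach is the natural one and is essentially correct, with one slip worth flagging. You assert both $\eta^*\omega_3=\lambda(\eta)\,\omega_2$ and $\rho^*\omega_2=\lambda(\rho)\,\omega_1$. The first is justified by irreducibility of $\mathcal{X}_2$ (an invariant closed $2$-form on an irreducible Hermitian symmetric space is a scalar multiple of the K\"ahler form), and you correctly flag this. The second, however, would require irreducibility of $\mathcal{X}_1$, which is \emph{not} assumed in the lemma; simplicity of $\mathfrak{g}_2$ says nothing about the shape of $\rho^*\omega_2$. You seem aware of this in your final paragraph, where you say simplicity of $\mathfrak{g}_2$ is used exactly once, for the scalar relation on $\eta^*\omega_3$; the earlier line is presumably just loose writing.

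The fix is painless, because you never actually need $\rho^*\omega_2$ to be proportional to $\omega_1$. Once $\eta^*\omega_3=c\,\omega_2$ with $|c|=\lambda(\eta)$ under your normalisation, pulling back by $\rho$ gives $(\eta\circ\rho)^*\omega_3=c\,\rho^*\omega_2$, and applying the sup-over-triangles functional on $\mathcal{X}_1$ directly (not any proportionality to $\omega_1$) yields
\[
\sup_{\Delta\subset\mathcal{X}_1}\Big|\int_\Delta(\eta\circ\rho)^*\omega_3\Big|
= |c|\cdot\sup_{\Delta\subset\mathcal{X}_1}\Big|\int_\Delta\rho^*\omega_2\Big|,
\]
whence $\lambda(\eta\circ\rho)=\lambda(\eta)\lambda(\rho)$. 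From there your $[0,1]$-factorisation argument is clean. So the issue is cosmetic rather than structural: just drop the unjustified claim about $\rho^*\omega_2$ and run the computation through the sup functional.
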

For a proof see \cite{C31}.
We will also need to know about the nonholomorphic tight homomorphisms of $\mathfrak{su}(1,1)$. These were classified in \cite{C8}:
\begin{thm}\label{biwclass}
Let $\rho\colon\mathfrak{su}(1,1)\rightarrow\mathfrak{g}$ be a tight homomorphism. Then $\rho$ factors as $\rho=\iota\circ\sum{\rho_i}\colon\mathfrak{su}(1,1)\rightarrow\oplus\mathfrak{sp}(2n_i,\mathbb{R})\rightarrow\mathfrak{g}$. Here the $\rho_i$ are (tight) irreducible representations $\rho_i\colon \mathfrak{su}(1,1)\rightarrow\mathfrak{sp}(2n_i,\mathbb{R})$ and $\iota$ is a tight and holomorphic embedding of $\oplus\mathfrak{sp}(2n_i,\mathbb{R})$ into $\mathfrak{g}$.
The homomorphism $\rho$ is nonholomorphic if and only if some $n_i$ is greater than one.
\end{thm}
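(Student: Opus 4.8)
The plan is to work entirely on the Lie algebra level and to study $\rho$ through the $\mathfrak{su}(1,1)$-module structure it induces on $\mathfrak{g}$. If $\rho=0$ there is nothing to prove, so assume $\rho$ is injective; after an inner automorphism we may take $\rho(\mathfrak{k}_1)\subset\mathfrak{k}_2$ and $\rho(\mathfrak{p}_1)\subset\mathfrak{p}_2$, and we may pick the hyperbolic generator $h$ of an $\mathfrak{sl}_2$-triple in $\mathfrak{su}(1,1)$ inside $\mathfrak{p}_1$, so that $\rho(h)\in\mathfrak{p}_2$ and $\mathrm{ad}(\rho(h))$ is semisimple with integer eigenvalues. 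Since $\mathcal{X}_1$ is the disc, every $\mathrm{SU}(1,1)$-invariant $2$-form on it is proportional to $\omega_1$, so $\rho^*\omega_2=\lambda_\rho\,\omega_1$ for a constant $\lambda_\rho$; using invariance of the Killing form and $[\mathfrak{p}_1,\mathfrak{p}_1]=\mathbb{R}Z_1$ one computes $\lambda_\rho=\langle\rho(Z_1),Z_2\rangle/\langle Z_1,Z_1\rangle$ (for $\omega_1,\omega_2$ normalised compatibly). Tightness then says precisely that $\lambda_\rho$ attains the maximal value permitted by the geometry of $\mathcal{X}_2$, which equals $\mathrm{rk}(\mathcal{X}_2)$.

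The heart of the proof is to convert this maximality into rigidity. Decompose $\mathfrak{p}_2$ over $\mathbb{C}$ into irreducible $\mathfrak{su}(1,1)$-submodules under $\mathrm{ad}\circ\rho$; on each of them $\mathrm{ad}(Z_2)$ acts as the complex structure $J$, with eigenvalues $\pm i$, whereas an irreducible summand isomorphic to $\mathrm{Sym}^{2m-1}$ carries $\mathrm{ad}(\rho(Z_1))$-weights running up to $\pm(2m-1)i$. Expressing $\langle\rho(Z_1),Z_2\rangle$ in terms of the multiplicities and highest weights of these summands yields an explicit estimate whose equality case forces every irreducible constituent of $\mathfrak{g}$ to be extremal; together with the elementary fact that an irreducible $\mathfrak{su}(1,1)$-representation preserving a symplectic form is exactly $\mathrm{Sym}^{2n-1}\colon\mathfrak{su}(1,1)\to\mathfrak{sp}(2n,\mathbb{R})$, this identifies the smallest reductive subalgebra through which $\rho$ factors as a direct sum $\bigoplus_i\mathfrak{sp}(2n_i,\mathbb{R})$ sitting inside $\mathfrak{g}$, so that $\rho=\iota\circ\sum\rho_i$ with each $\rho_i\colon\mathfrak{su}(1,1)\to\mathfrak{sp}(2n_i,\mathbb{R})$ irreducible. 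That $\iota$ and each $\rho_i$ are themselves tight is immediate from Lemma \ref{compose}, each $\mathfrak{sp}(2n_i,\mathbb{R})$ being simple; and a separate standard computation (or the classification of tight holomorphic maps) shows that a tight embedding of a product of tube-type factors is holomorphic.

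For the holomorphicity dichotomy: $\rho$ satisfies (H1) iff $\mathrm{ad}(\rho(Z_1))$ and $\mathrm{ad}(Z_2)$ agree on $\rho(\mathfrak{p}_1)$. If every $n_i=1$ then each $\rho_i$ is an isomorphism $\mathfrak{su}(1,1)\cong\mathfrak{sp}(2,\mathbb{R})$, so $\rho$ is the diagonal map $\mathfrak{su}(1,1)\to\bigoplus_i\mathfrak{su}(1,1)$ followed by the holomorphic embedding $\iota$, hence holomorphic. If some $n_i>1$, then the highest weight vectors of the $\mathrm{Sym}^{2n_i-1}$ constituent of $\mathfrak{p}_2$ are $\mathrm{ad}(\rho(Z_1))$-eigenvectors with eigenvalue of modulus $>1$, while $\mathrm{ad}(Z_2)=J$ has only eigenvalues $\pm i$ there; thus (H1) fails and $\rho$ is nonholomorphic.

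I expect the main obstacle to be the second step: extracting rigidity from the maximality of $\lambda_\rho$. This needs a clean formula for $\lambda_\rho$ in terms of the $\mathfrak{su}(1,1)$-module decomposition of $\mathfrak{g}$ together with a sharp optimisation, and it is here that the tube-type phenomenon — that a tight image of the tube-type disc must lie in a tube-type subdomain — really does the work. The $\mathfrak{sl}_2$-representation theory, the holomorphicity criterion, and Lemma \ref{compose} are, by comparison, routine.
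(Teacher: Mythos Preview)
The paper does not prove this theorem; it is quoted from Burger--Iozzi--Wienhard \cite{C8} without proof, so there is no in-paper argument to compare against. Your sketch is therefore being judged on its own, and it has two genuine gaps.

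The more serious one is your justification that $\iota$ is holomorphic. You invoke as ``standard'' that a tight embedding of a product of tube-type factors into $\mathfrak{g}$ is holomorphic. This is not standard: for irreducible domains other than the disc it is precisely the content of Theorems~\ref{oldmain} and~\ref{main3}, which require the full machinery of this paper and its predecessor; and when some $n_i=1$ the factor is $\mathfrak{sp}(2,\mathbb{R})\cong\mathfrak{su}(1,1)$, from which nonholomorphic tight maps \emph{do} exist, so the assertion is simply false in that case. In \cite{C8} the logic runs the other way: one first constructs, via strongly orthogonal roots and the maximal polydisc, an explicit \emph{holomorphic} tight embedding $\iota$ of $\bigoplus_i\mathfrak{sp}(2n_i,\mathbb{R})$, and then shows $\rho(\mathfrak{su}(1,1))$ lands in its image. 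Holomorphicity of $\iota$ is by construction, not deduced a posteriori from tightness.

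Your rigidity step is also underdetermined. Decomposing $\mathfrak{p}_2$ as an $\mathfrak{su}(1,1)$-module and observing that the constituents are ``extremal'' does not, on its own, produce a Lie \emph{subalgebra} of $\mathfrak{g}$ isomorphic to $\bigoplus_i\mathfrak{sp}(2n_i,\mathbb{R})$ containing $\rho(\mathfrak{su}(1,1))$; you need to exhibit those factors as honest subalgebras with the correct real form, and nothing in the sketch does this. Finally, your nonholomorphicity check tests $(H1)$ on highest-weight vectors of $\mathfrak{p}_2$, but $(H1)$ concerns only the two-dimensional subspace $\rho(\mathfrak{p}_1)$, which need not contain those vectors. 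The correct argument is that the irreducible $\rho_i\colon\mathfrak{su}(1,1)\to\mathfrak{sp}(2n_i,\mathbb{R})$ itself fails $(H1)$ when $n_i>1$ (a direct matrix computation), and this failure survives composition with the holomorphic $\iota$.
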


We will also need the following structure theorem from \cite{C8}:
\begin{lma}\label{tube3} 
Let $\rho\colon\mathfrak{g}_1\rightarrow\mathfrak{g}_2$ be an injective tight homomorphism. If $\mathfrak{g}_2$ is of tube type then $\mathfrak{g}_1$ must be of tube type as well.
\end{lma}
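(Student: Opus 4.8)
The plan is to detect tube type on the Shilov boundary, where it admits a clean numerical characterisation. Assume first that $\mathfrak{g}_1$ is simple; this is the case needed in the applications, and the general statement reduces to it (with some extra care about homomorphisms whose differential at the base point degenerates on a factor, for which we would refer to \cite{C8}). Normalise the K\"ahler forms $\omega_i$ in the standard way (harmless, since tightness is the scale-free equality in $(\ref{t23})$), and let $\beta_i\colon\check{S}_i^{\,3}\rightarrow\mathbb{R}$ be the Bergmann cocycle on the Shilov boundary $\check{S}_i$ of $\mathcal{X}_i$, normalised so that $\beta_i(x,y,z)=\frac{1}{\pi}\int_{\Delta(x,y,z)}\omega_i$, where $\Delta(x,y,z)$ denotes the ideal geodesic triangle with vertices $x,y,z$. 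I would use two known facts: $\mathcal{X}_i$ is of tube type if and only if $\beta_i$ takes values in $\mathbb{Z}$; and, for $\mathcal{X}_i$ simple but \emph{not} of tube type, the image of $\beta_i$ contains a nondegenerate interval (for the disc versus the complex hyperbolic plane this is exactly the passage from the orientation cocycle to the Cartan angular invariant).

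Next I would analyse the boundary behaviour of $\rho$. The totally geodesic map $\mathcal{X}_1\rightarrow\mathcal{X}_2$ extends to a $\rho$-equivariant map $\phi\colon\check{S}_1\rightarrow\check{S}_2$, and since $\rho$ carries geodesics to geodesics it carries $\Delta(x,y,z)$ onto $\Delta(\phi x,\phi y,\phi z)$; hence $\beta_2(\phi x,\phi y,\phi z)=\frac{1}{\pi}\int_{\Delta(x,y,z)}\rho^*\omega_2$ for all $x,y,z\in\check{S}_1$. Now $\rho^*\omega_2$ is a $G_1$-invariant closed $2$-form on the irreducible Hermitian symmetric space $\mathcal{X}_1$, and the space of such forms is spanned by $\omega_1$, so $\rho^*\omega_2=\lambda\omega_1$ for some $\lambda\in\mathbb{R}$. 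Therefore $\phi^*\beta_2=\lambda\beta_1$ as cocycles on $\check{S}_1^{\,3}$. Since $\rho$ is injective and tight, $\sup_{\Delta\subset\mathcal{X}_1}\int_\Delta\rho^*\omega_2=\sup_{\Delta\subset\mathcal{X}_2}\int_\Delta\omega_2>0$, so $\rho^*\omega_2\neq 0$ and thus $\lambda\neq 0$.

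Finally, if $\mathcal{X}_2$ is of tube type then $\beta_2$, and hence $\phi^*\beta_2=\lambda\beta_1$, is $\mathbb{Z}$-valued; thus $\beta_1$ takes values in the discrete set $\frac{1}{\lambda}\mathbb{Z}$. By the characterisation in the first paragraph this forces $\mathcal{X}_1$ to be of tube type, which is what we want. The step that requires the most care — and that draws on the full Burger--Iozzi--Wienhard machinery of \cite{C8} — is the second paragraph: that a tight totally geodesic map really does induce a boundary map $\phi$ sending $\check{S}_1$ into $\check{S}_2$, together with the precise normalisation in the equivalence ``$\mathcal{X}$ is of tube type $\iff\beta_{\mathcal{X}}$ is integer-valued''. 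This, and the reducible case of $\mathfrak{g}_1$, is why we quote the statement rather than reprove it here.
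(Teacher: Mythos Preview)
The paper does not give a proof of this lemma; it is quoted as a structure theorem from \cite{C8} (Burger--Iozzi--Wienhard), so there is no in-paper argument to compare against. Your decision at the end to cite rather than reprove is exactly what the authors do.

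Your sketch is in the spirit of the argument in \cite{C8}: characterise tube type by the value set of the Bergmann/Maslov cocycle on the Shilov boundary, use irreducibility of $\mathcal{X}_1$ to get $\rho^*\omega_2=\lambda\omega_1$ with $\lambda\neq 0$, and conclude that discreteness of the value set pulls back. Two cautions, both concentrated in the paragraph you yourself flag. First, a general (possibly non-holomorphic) totally geodesic map need not send $\check S_1$ into $\check S_2$: the Shilov boundary is a particular $G$-orbit in the topological boundary of the bounded domain, not the visual boundary to which geodesic rays naturally extend, so ``$\rho$ carries geodesics to geodesics'' does not by itself produce the map $\phi$. In \cite{C8} this is handled by working with the bounded K\"ahler class in continuous bounded cohomology and with the generalised Maslov/Hermitian triple product on transverse triples, rather than by constructing a pointwise boundary extension. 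Second, the identity $\beta_i(x,y,z)=\frac{1}{\pi}\int_{\Delta(x,y,z)}\omega_i$ holds at the level of bounded cohomology classes; as a pointwise equality on arbitrary triples it needs justification. Since you already concede that the substantive work is in \cite{C8}, these are refinements of your caveat rather than new gaps.
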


\section{Proof of the Main Theorem}

Our proof uses two different methods for the two different exceptional Hermitian Lie algebras. We begin by doing some of the more technical parts in the following two lemmas.

\begin{lma}\label{mainlemma1}
Let $\rho\colon\mathfrak{su}(1,1)\rightarrow\mathfrak{e}_{6(-14)}$ be a tight nonholomorphic homomorphism. Then the weighted Dynkin diagram of the complexification $\rho_\mathbb{C}\colon\mathfrak{sl}(2,\mathbb{C})\rightarrow\mathfrak{e}_{6,\mathbb{C}}$ is:

\begin{picture}(50,47)
\multiput(5,31)(30,0){5}{\circle{5}}
\multiputlist(20,31)(30,0)%
{{\line(10,0){25}},{\line(10,0){25}},{\line(10,0){25}},{\line(10,0){25}}}
\multiputlist(5,41)(30,0){$1$,$0$,$0$,$0$,$1$}
\put(65,1){\circle{5}}
\put(65,4){\line(0,10){25}}
\put(70,0){$2$}
\end{picture}

\end{lma}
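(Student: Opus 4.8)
The plan is to reduce $\rho$, via the Burger--Iozzi--Wienhard classification of tight maps from the Poincar\'e disc together with a rank estimate, to a composition of two explicit homomorphisms, and then to identify the resulting nilpotent orbit in $\mathfrak{e}_{6,\mathbb{C}}$ by computing its weighted Dynkin diagram.

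\emph{Reduction.} By Theorem~\ref{biwclass} the tight homomorphism $\rho$ factors as $\rho=\iota\circ\sum_i\rho_i\colon\mathfrak{su}(1,1)\to\bigoplus_i\mathfrak{sp}(2n_i,\mathbb{R})\to\mathfrak{e}_{6(-14)}$, with each $\rho_i$ a tight irreducible representation and $\iota$ a tight holomorphic embedding. A holomorphic totally geodesic embedding carries a maximal polydisc of the domain onto a holomorphic totally geodesic polydisc in the codomain, which lies inside a maximal polydisc there; hence it cannot increase the rank. Therefore $\sum_i n_i=\operatorname{rank}\bigl(\bigoplus_i\mathfrak{sp}(2n_i,\mathbb{R})\bigr)\le\operatorname{rank}\mathfrak{e}_{6(-14)}=2$. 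Since $\rho$ is nonholomorphic, Theorem~\ref{biwclass} gives some $n_i\ge 2$, which combined with $\sum_i n_i\le 2$ leaves exactly one summand, with $n_1=2$. Hence, up to equivalence, $\rho=\iota\circ\rho_1$ where $\rho_1\colon\mathfrak{su}(1,1)\to\mathfrak{sp}(4,\mathbb{R})$ is the irreducible $4$-dimensional representation and $\iota\colon\mathfrak{sp}(4,\mathbb{R})\to\mathfrak{e}_{6(-14)}$ is a tight holomorphic embedding.

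\emph{Identifying $\iota$.} Being holomorphic, $\iota$ satisfies (H1), so by Theorem~\ref{iharastructure} it factors through a Hermitian regular subalgebra $\mathfrak{g}_3\subset\mathfrak{e}_{6(-14)}$ on which it satisfies (H2). Using the classification of tight holomorphic maps \cite{C31} together with the list of Hermitian regular subalgebras of $\mathfrak{e}_{6(-14)}$ (read off from the Dynkin diagram of $\mathfrak{e}_6$) one checks that $\iota$ is, up to equivalence, the composition of the standard sub-spin-factor inclusion $\mathfrak{sp}(4,\mathbb{R})=\mathfrak{so}(2,3)\hookrightarrow\mathfrak{so}(2,8)$ with the tight holomorphic inclusion of $\mathfrak{so}(2,8)$ as the maximal tube-type Hermitian regular subalgebra of $\mathfrak{e}_{6(-14)}$ (equivalently, $\iota$ factors through $\mathfrak{su}(2,2)$ via the standard inclusion $\mathfrak{sp}(4,\mathbb{R})\hookrightarrow\mathfrak{su}(2,2)$). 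Should this step leave more than one equivalence class of $\iota$, they are all disposed of by the computation below.

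\emph{Computing the weighted Dynkin diagram.} By Theorem~\ref{wdd} the weighted Dynkin diagram of $\rho_\mathbb{C}$ is an invariant of its equivalence class, determined by the semisimple element $h:=\rho_\mathbb{C}(\operatorname{diag}(1,-1))$. Here $\rho_{1,\mathbb{C}}$ sends $\operatorname{diag}(1,-1)$ to $\operatorname{diag}(3,1,-1,-3)$, the characteristic of the regular nilpotent orbit of $\mathfrak{sp}(4,\mathbb{C})$. Since all the inclusions of the previous step are regular, their Cartan subalgebras are nested, so $h$ may be transported into a Cartan subalgebra of $\mathfrak{e}_{6,\mathbb{C}}$; evaluating the six $E_6$-simple roots on it via the $\Pi$-system realising $\mathfrak{so}(10,\mathbb{C})$ inside $\mathfrak{e}_{6,\mathbb{C}}$, and conjugating the result into the closed dominant Weyl chamber, produces the diagram with weight $1$ on the two end nodes of the length-five chain, weight $0$ on the three interior nodes, and weight $2$ on the branch node attached to the central chain node --- that is, the diagram of the statement. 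As a check, on the minuscule $27$-dimensional representation of $\mathfrak{e}_{6,\mathbb{C}}$ (which restricts to $16\oplus10\oplus1$ for $\mathfrak{so}(10,\mathbb{C})$) this $h$ acts with eigenvalue multiset $\{\pm4,\ \pm3^{(4)},\ \pm2,\ \pm1^{(4)},\ 0^{(7)}\}$, whose largest eigenvalue $4$ agrees with $\varpi_1(h)$ for the stated diagram, where $\varpi_1=\tfrac13(4\alpha_1+3\alpha_2+5\alpha_3+6\alpha_4+4\alpha_5+2\alpha_6)$.

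\emph{Main obstacle.} The delicate parts are the middle step and the bookkeeping in the last one: pinning down the tight holomorphic $\iota$ unambiguously via \cite{C31}, and then fixing compatible Cartan subalgebras and root data along $\mathfrak{sl}(2,\mathbb{C})\subset\mathfrak{sp}(4,\mathbb{C})\subset\mathfrak{so}(10,\mathbb{C})\subset\mathfrak{e}_{6,\mathbb{C}}$, expressing the $E_6$-simple roots in these terms, evaluating them on $h$, and locating the dominant-chamber representative. This is where the explicit matrix models for $\mathfrak{su}(p,q)$ and $\mathfrak{sp}(2p,\mathbb{R})$ fixed in the preliminaries are used.
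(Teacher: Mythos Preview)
Your overall strategy coincides with the paper's: factor $\rho$ via Theorem~\ref{biwclass}, use the rank-two constraint to force $\bigoplus\mathfrak{sp}(2n_i,\mathbb{R})=\mathfrak{sp}(4,\mathbb{R})$, then analyse the holomorphic piece $\iota$ through Theorem~\ref{iharastructure}, and finally compute the dominant representative of $\rho_\mathbb{C}(H)$.

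The substantive gap is in your ``Identifying $\iota$'' step. You assert that $\iota$ is, up to equivalence, a single map (through $\mathfrak{so}(2,8)$, equivalently $\mathfrak{su}(2,2)$), appealing to \cite{C31} and ``the list of Hermitian regular subalgebras'', and then hedge that any leftover classes are ``disposed of by the computation below''. But the computation below is done for only one candidate, so this hedge does no work. The paper confronts this directly: it first argues that the $(H2)$-part $h$ must land in $\mathfrak{su}(2,2)$ (ruling out $\mathfrak{sp}(4,\mathbb{R})$ itself and odd $\mathfrak{so}(q,2)$ by root-length considerations inside $E_6$, and reducing even $\mathfrak{so}(q,2)$ to $\mathfrak{so}(4,2)\cong\mathfrak{su}(2,2)$), and then enumerates \emph{all four} chains of maximal Hermitian regular inclusions $r\colon\mathfrak{su}(2,2)\hookrightarrow\mathfrak{e}_{6(-14)}$ from Ihara's tables. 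It exhibits the corresponding $\Pi$-systems explicitly and checks that the four complexifications $r_{1,\mathbb{C}},\ldots,r_{4,\mathbb{C}}$ coincide or are related by the Weyl reflection $s_{\alpha_4+\alpha_5}$. Only after this is $\rho_\mathbb{C}$ known to be unique up to inner automorphism, so that a single weighted Dynkin diagram computation suffices. Your sketch omits precisely this enumeration and the Weyl-equivalence check, and without them the statement is not yet proved.

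A secondary point: in your final step you state the outcome rather than perform it. The paper writes $\rho'_\mathbb{C}(H)$ in the coroot basis of $\mathfrak{sl}(4,\mathbb{C})$, transports it via the chosen $\Pi$-system to $2H'_1+2H'_2+6H'_3+6H'_4+3H'_5+3H'_6$, evaluates the six simple roots, and then applies an explicit sequence of simple reflections to reach the dominant chamber. Your $27$-dimensional eigenvalue check is a pleasant sanity test but does not replace this computation.
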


\begin{lma}\label{mainlemma2}
Let $\rho\colon\mathfrak{sp}(2n,\mathbb{R})\rightarrow\mathfrak{e}_{7(-25)}$ be a homomorphism where $n=2$ or $3$. Then there exists a simple proper Hermitian regular subalgebra of $\mathfrak{e}_{7(-25)}$ that contains the image of $\mathfrak{sp}(2n,\mathbb{R})$.
\end{lma}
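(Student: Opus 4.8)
The plan is as follows. Since $\mathfrak{sp}(2n,\mathbb{R})$ is simple we may assume $\rho$ is injective (if $\rho=0$ the image $\{0\}$ lies in any proper simple Hermitian regular subalgebra, e.g.\ one isomorphic to $\mathfrak{su}(1,1)$). Put $\mathfrak{s}=\rho(\mathfrak{sp}(2n,\mathbb{R}))$ and pass to complexifications, $\rho_\mathbb{C}\colon\mathfrak{sp}(2n,\mathbb{C})\hookrightarrow\mathfrak{e}_{7,\mathbb{C}}$ with image $\mathfrak{s}_\mathbb{C}$. The conjugation $\sigma$ of $\mathfrak{e}_{7,\mathbb{C}}$ defining $\mathfrak{e}_{7(-25)}$ fixes $\mathfrak{s}$ pointwise, hence stabilizes $\mathfrak{s}_\mathbb{C}$, and $\mathfrak{s}_\mathbb{C}\cap\mathfrak{e}_{7(-25)}=\mathfrak{s}$. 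Therefore it suffices to find a proper subroot system $\Delta'\subsetneq\Delta$ of $\mathfrak{e}_{7,\mathbb{C}}$ with connected Dynkin diagram such that $\mathfrak{s}_\mathbb{C}\subseteq\mathfrak{g}_\mathbb{C}(\Delta')$: by Lemma~\ref{positivebasis} we may then choose a basis of $\Delta'$ consisting of positive roots, the theorem immediately following Theorem~\ref{ihara} shows that $\mathfrak{g}_3:=\mathfrak{g}_\mathbb{C}(\Delta')\cap\mathfrak{e}_{7(-25)}$ is a Hermitian regular subalgebra, and $\mathfrak{g}_3$ is proper (as $\Delta'\subsetneq\Delta$), simple (a real form of the simple complex algebra $\mathfrak{g}_\mathbb{C}(\Delta')$), noncompact — hence genuinely of Hermitian type — since it contains the noncompact algebra $\mathfrak{s}$, and of course $\mathfrak{s}\subseteq\mathfrak{g}_3$.

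The substance of the proof is the construction of such a $\Delta'$, for which I would use Dynkin's classification of semisimple subalgebras of $\mathfrak{e}_{7,\mathbb{C}}$, treating the $C_2$-case ($n=2$) and the $C_3$-case ($n=3$) separately. A key point is that neither $\mathfrak{sp}(4,\mathbb{C})$ nor $\mathfrak{sp}(6,\mathbb{C})$ can occur as a subalgebra of $\mathfrak{e}_{7,\mathbb{C}}$ that lies in no proper regular subalgebra (an \emph{S-subalgebra}): the only maximal S-subalgebras of $\mathfrak{e}_{7,\mathbb{C}}$ with a factor of type $C$ are $\mathfrak{g}_{2,\mathbb{C}}\oplus\mathfrak{sp}(6,\mathbb{C})$ and $\mathfrak{sl}(2,\mathbb{C})\oplus\mathfrak{f}_{4,\mathbb{C}}$, and in both the $C$-factor already lies in a proper regular subalgebra — for instance $\mathfrak{sp}(6,\mathbb{C})\subset\mathfrak{sl}(6,\mathbb{C})$ (the regular $A_5$) and $\mathfrak{sp}(6,\mathbb{C})\subset\mathfrak{f}_{4,\mathbb{C}}\subset\mathfrak{e}_{6,\mathbb{C}}$ (the regular $E_6$), and similarly for $\mathfrak{sp}(4,\mathbb{C})$. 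Consequently every conjugacy class of subalgebra isomorphic to $\mathfrak{sp}(4,\mathbb{C})$ or $\mathfrak{sp}(6,\mathbb{C})$ lies in a proper regular subalgebra of $\mathfrak{e}_{7,\mathbb{C}}$, and for each one exhibits a proper \emph{simple} regular subalgebra — of type $A_k$, $D_k$ or $E_6$ — containing it.

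The main obstacle is precisely this last case-by-case step. The delicate point is that a proper regular subalgebra containing $\mathfrak{s}_\mathbb{C}$ need not be simple, and $\mathfrak{s}_\mathbb{C}$ may project nontrivially to several of its simple factors: for $n=2$, a diagonally embedded $\mathfrak{sp}(4,\mathbb{C})\hookrightarrow\mathfrak{sl}(4,\mathbb{C})\oplus\mathfrak{sl}(4,\mathbb{C})$ sits in no single factor. (For $n=3$ this cannot occur, since a simple regular subalgebra of $\mathfrak{e}_{7,\mathbb{C}}$ of rank $\le 4$ is of type $A_3$, $A_4$ or $D_4$ and none of these contains $\mathfrak{sp}(6,\mathbb{C})$, so the $C_3$-case is already clean.) In such a situation one must instead find an alternative simple regular overalgebra — in the example above the diagonal $\mathfrak{sp}(4,\mathbb{C})$ acts as $\mathbb{C}^4\otimes\mathbb{C}^2$ and so lies in the simple regular subalgebra $\mathfrak{sl}(8,\mathbb{C})$ (one could equally use $\mathfrak{so}(8,\mathbb{C})$, since that module carries an orthogonal form). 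Verifying, over the finite list of embeddings furnished by Dynkin's tables, that such a simple regular overalgebra always exists is where the work lies; once $\Delta'$ has been produced, the descent to $\mathfrak{e}_{7(-25)}$ explained in the first paragraph is automatic.
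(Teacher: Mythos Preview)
Your outline has a real gap in the passage from the complex picture to the real one. You correctly note (first paragraph) that it would suffice to find a proper subroot system $\Delta'\subsetneq\Delta$, \emph{with $\Delta$ taken with respect to a compact Cartan subalgebra $\mathfrak{h}\subset\mathfrak{k}$}, such that the actual image $\mathfrak{s}_\mathbb{C}$ is contained in $\mathfrak{g}_\mathbb{C}(\Delta')$; the theorem following Theorem~\ref{ihara} then applies. But Dynkin's tables only classify subalgebras up to inner automorphism of $\mathfrak{e}_{7,\mathbb{C}}$: they tell you that some $g\in\mathrm{Inn}(\mathfrak{e}_{7,\mathbb{C}})$ carries $\mathfrak{s}_\mathbb{C}$ into a regular subalgebra defined relative to $\mathfrak{h}_\mathbb{C}$, i.e.\ that $\mathfrak{s}_\mathbb{C}$ itself lies in a regular subalgebra defined relative to the Cartan subalgebra $g^{-1}(\mathfrak{h}_\mathbb{C})$. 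There is no reason for $g^{-1}(\mathfrak{h}_\mathbb{C})$ to be the complexification of a maximal torus of $\mathfrak{k}$, and hence no reason for $g^{-1}(\mathfrak{g}_\mathbb{C}(\Delta'))$ to be $\sigma$-stable or to yield a Hermitian regular subalgebra on intersecting with $\mathfrak{e}_{7(-25)}$. Thus the ``automatic descent'' of your first paragraph is not fed by the construction in your second and third paragraphs.

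The paper closes precisely this gap, and does so without any case analysis. From Dynkin one extracts only a single numerical fact: the centralizer $\mathfrak{z}_\mathbb{C}$ of $\mathfrak{s}_\mathbb{C}$ in $\mathfrak{e}_{7,\mathbb{C}}$ has complex dimension at least $4$. Since $\rho$ respects the Cartan decomposition, $\mathfrak{z}_\mathbb{C}$ is stable under both the conjugation $\theta$ defining $\mathfrak{e}_{7(-25)}$ and the compact conjugation $\tau$, so its real form $\mathfrak{z}\subset\mathfrak{e}_{7(-25)}$ has real dimension $\ge 4$. As the real rank of $\mathfrak{e}_{7(-25)}$ is $3$, $\mathfrak{z}$ cannot lie entirely in $\mathfrak{p}$ (any subalgebra of $\mathfrak{p}$ is abelian of dimension $\le 3$), and one obtains a nonzero $H_0\in\mathfrak{z}\cap\mathfrak{k}$. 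Now choose a maximal abelian $\mathfrak{h}\subset\mathfrak{k}$ containing $H_0$ and decompose with respect to $\mathfrak{h}_\mathbb{C}$: every root $\alpha$ whose root space meets $\rho(\mathfrak{p}'_\mathbb{C})$ satisfies $\alpha(H_0)=0$, so the subroot system these roots generate gives a regular subalgebra not containing $H_0$, hence proper. This $\Delta'$ is born relative to a compact Cartan, so the Hermitian-regular machinery applies directly. Simplicity then follows from a short real-rank count (each simple factor of this minimal regular subalgebra receives $\mathfrak{g}'$ injectively, hence has real rank $\ge 2$, while the total real rank is $\le 3$), which also supersedes your case-by-case verification for $n=2$.
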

Let us remark that Lemma \ref{mainlemma2} is in no way a corollary of Theorem~\ref{iharastructure} as we have no assumption of holomorphicity.

\begin{proof}[Proof of Lemma \ref{mainlemma1}]
The proof consists of two parts. In the first part we show that $\rho_\mathbb{C}$ is unique up to equivalence and give a concrete description of $\rho_\mathbb{C}$. In the second part we calculate the weighted Dynkin diagram of it.

By Theorem \ref{biwclass} a tight homomorphism $\rho\colon\mathfrak{su}(1,1)\rightarrow\mathfrak{e}_{6(-14)}$ factors as 
$\iota\circ\rho'\colon\mathfrak{su}(1,1)\rightarrow\oplus \mathfrak{sp}(2n_i,\mathbb{R})\rightarrow\mathfrak{e}_{6(-14)}$. Here $\rho'=\sum\rho_i$ is a sum of irreducible representations and $\iota$ is a holomorphic embedding. Since $\mathfrak{e}_{6(-14)}$ is of real rank two there are three possibilities for $\oplus\mathfrak{sp}(2n_i,\mathbb{R})$, either $\mathfrak{sp}(2,\mathbb{R})$, $\mathfrak{sp}(2,\mathbb{R})\oplus \mathfrak{sp}(2,\mathbb{R})$ or $\mathfrak{sp}(4,\mathbb{R})$. Since $\rho$ is assumed to be nonholomorphic we can conclude, again by Theorem \ref{biwclass}, that $\oplus\mathfrak{sp}(2n_i,\mathbb{R})=\mathfrak{sp}(4,\mathbb{R})$.
Next we consider the possibilities for $\iota$. By Theorem \ref{iharastructure} $\iota$ factors as $\iota=r\circ h\colon\mathfrak{sp}(4,\mathbb{R})\rightarrow\mathfrak{g}\rightarrow\mathfrak{e}_{6(-14)}$ where $h$ is an $(H2)$-homomorphism and $r$ is 
the inclusion of a Hermitian regular subalgebra. 
By a simple rank argument we can conclude that $\mathfrak{g}$ must be simple and of real rank two. 
As its complexification $\mathfrak{g}_\mathbb{C}$ is a regular subalgebra of $\mathfrak{e}_{6,\mathbb{C}}$ it can not have roots of different lengths.

We first analyse the possibilities for $h\colon\mathfrak{sp}(4,\mathbb{R})\rightarrow\mathfrak{g}$. 
The first possibility is the identity homomorphism into $\mathfrak{g}=\mathfrak{sp}(4,\mathbb{R})$. 
We can dismiss this possibility since $\mathfrak{sp}(4,\mathbb{C}) = \mathfrak{sp}(4,\mathbb{R})_{\mathbb{C}}$ can not be a regular subalgebra of $\mathfrak{e}_{6,\mathbb{C}}$ as it has roots of different lengths.

The second possibility is the standard inclusion into $\mathfrak{g}=\mathfrak{su}(2,2)$, \cite{C7}. 
Recall that $\mathfrak{sp}(4,\mathbb{R})$ is isomorphic to $\mathfrak{so}(3,2)$. This gives us a third possibility, an (H2)-homomorphism $\mathfrak{so}(3,2)\rightarrow\mathfrak{so}(q,2)$, $q>3$, as defined in \cite[p.~294]{C6}.
If $q$ is odd we again get roots of different lengths, we can thus conclude that $q$ must be even. This homomorphism decomposes as $\mathfrak{so}(3,2)\rightarrow\mathfrak{so}(4,2)\rightarrow\mathfrak{so}(q,2)$ where the second part is the inclusion of a regular subalgebra \cite[p.~294]{C6}. We choose to view the second part as part of the homomorphism $r$. We are thus back to the second case since $\mathfrak{so}(4,2)$ is isomorphic to $\mathfrak{su}(2,2)$.
In conclusion, every holomorphic homomorphism $\iota\colon\mathfrak{sp}(4,\mathbb{R})\rightarrow\mathfrak{e}_{6(-14)}$ factors as $\iota=r\circ h\colon\mathfrak{sp}(4,\mathbb{R})\rightarrow\mathfrak{su}(2,2)\rightarrow\mathfrak{e}_{6(-14)}$ with $h$ the holomorphic standard inclusion and $r$ some inclusion homomorphism of $\mathfrak{su}(2,2)$ as a Hermitian regular subalgebra of $\mathfrak{e}_{6(-14)}$.

Next we analyse the possibilities for $r\colon\mathfrak{su}(2,2)\rightarrow \mathfrak{e}_{6(-14)}$. Let $\Delta$ denote the root system of $\mathfrak{e}_{6,\mathbb{C}}$. We index the simple roots as in the Dynkin diagram below where $\alpha_1$ denotes the unique positive noncompact simple root.\\

\begin{picture}(50,37)
\multiput(5,31)(30,0){5}{\circle{5}}
\multiputlist(20,31)(30,0)%
{{\line(10,0){25}},{\line(10,0){25}},{\line(10,0){25}},{\line(10,0){25}}}
\multiputlist(5,41)(30,0){$\alpha_{1}$,$\alpha_{2}$,$\alpha_{3}$,$\alpha_{4}$,$\alpha_{5}$}
\put(65,1){\circle{5}}
\put(65,4){\line(0,10){25}}
\put(70,0){$\alpha_6$}
\end{picture}

Any regular subalgebra fits into a chain of maximal (with respect to inclusion) regular subalgebras. In \cite[p.~283-291]{C6} we find tables of the maximal Hermitian regular subalgebras. Investigating these tables we see that there are four possible chains of inclusions of $\mathfrak{su}(2,2)$ into $\mathfrak{e}_{6(-14)}$, namely

\begin{align*}
r_1\colon\mathfrak{su}(2,2)\hookrightarrow\mathfrak{su}(3,2)\hookrightarrow \mathfrak{su}(4,2)\hookrightarrow\mathfrak{e}_{6(-14)},\\
r_2\colon\mathfrak{su}(2,2)\hookrightarrow\mathfrak{su}(3,2)\hookrightarrow \mathfrak{so}^{*}(10)\hookrightarrow\mathfrak{e}_{6(-14)},\\
r_3\colon\mathfrak{su}(2,2)\hookrightarrow \mathfrak{so}(8,2)\hookrightarrow\mathfrak{e}_{6(-14)},\\
r_4\colon\mathfrak{su}(2,2)= \mathfrak{so}(4,2)\hookrightarrow \mathfrak{so}(6,2)\hookrightarrow \mathfrak{so}(8,2)\hookrightarrow\mathfrak{e}_{6(-14)}.
\end{align*}
These correspond to the following inclusions of root systems
\allowdisplaybreaks
\begin{align*}
\Delta(\{\alpha_2+2\alpha_3+2\alpha_4+\alpha_5+\alpha_6,\alpha_1,\alpha_2\})&\subset \\
\Delta(\{\alpha_2+2\alpha_3+2\alpha_4+\alpha_5+\alpha_6,\alpha_1,\alpha_2,\alpha_3\})&\subset\\
\Delta(\{\alpha_2+2\alpha_3+2\alpha_4+\alpha_5+\alpha_6,\alpha_1,\alpha_2,\alpha_3,\alpha_6\})&\subset\Delta,\\
\Delta(\{\alpha_2+2\alpha_3+2\alpha_4+\alpha_5+\alpha_6,\alpha_1,\alpha_2\})&\subset \\
\Delta(\{\alpha_2+2\alpha_3+2\alpha_4+\alpha_5+\alpha_6,\alpha_1,\alpha_2,\alpha_3\})&\subset \\
\Delta(\{\alpha_1,\alpha_2,\alpha_3,\alpha_4, \alpha_3+\alpha_4+\alpha_5+\alpha_6\})&\subset\Delta,\\
\Delta(\{\alpha_2+2\alpha_3+\alpha_4+\alpha_6,\alpha_1,\alpha_2\})&\subset \\
\Delta(\{\alpha_1,\alpha_2,\alpha_3,\alpha_4, \alpha_6\})&\subset\Delta,\\
\Delta(\{\alpha_1,\alpha_2,\alpha_2+2\alpha_3+\alpha_4+\alpha_6\})&\subset \\
\Delta(\{\alpha_1,\alpha_2,\alpha_3, \alpha_3+\alpha_4+\alpha_6\})&\subset \\
\Delta(\{\alpha_1,\alpha_2, \alpha_3,\alpha_4, \alpha_6\})&\subset\Delta.
\end{align*}

From the subroot systems we see immediatly that $r_{1,\mathbb{C}}$ and $r_{2,\mathbb{C}}$ coincide with each other. Applying the reflection  $s_{\alpha_4+\alpha_5}$ we see that $r_{3,\mathbb{C}}$ and $r_{4,\mathbb{C}}$ are equivalent to $r_{1,\mathbb{C}}$ as
\begin{align*}
s_{\alpha_4+\alpha_5}(\alpha_2+2\alpha_3+\alpha_4+\alpha_6)&=\alpha_2+2\alpha_3+2\alpha_4+\alpha_5+\alpha_6,\\
s_{\alpha_4+\alpha_5}(\alpha_1)&=\alpha_1,\\
s_{\alpha_4+\alpha_5}(\alpha_2)&=\alpha_2.
\end{align*} 
We have now seen that $\rho' , h$ and $r_\mathbb{C}$ are unique up to equivalence.
We can thus conclude that $\rho_\mathbb{C}$ is unique up to equivalence.

The homomorphism $\rho'$ is given by\\
$\left(\begin{array}{cc}
a&b\\
\bar{b}&-a
\end{array}\right) \mapsto$
 $\left(\begin{array}{cccc}
3a&0&0&\sqrt{3}b\\
0&-a&2\bar{b}&\sqrt{3}b\\
\sqrt{3}\,\bar{b}&2b&-3a&0\\
0&\sqrt{3}\,\bar{b}&0&a
\end{array}\right) $.\\
The homomorphism $h$ is just the inclusion homomorphism.
Let $\mathfrak{h}_\mathbb{C}\subset\mathfrak{sl}(4,\mathbb{C})=\mathfrak{su}(2,2)_\mathbb{C}$ be the Cartan subalgebra of diagonal matrices. The coroots associated to the root space decomposition of $\mathfrak{sl}(4,\mathbb{C})$ with respect to $\mathfrak{h}_\mathbb{C}$ are
\begin{eqnarray*}
H_3=\mbox{diag}(1,-1,0,0)\\
H_1=\mbox{diag}(0,1,-1,0)\\
H_2=\mbox{diag}(0,0,1,-1).
\end{eqnarray*}

Let $\{H'_i\}_1^6$ denote the coroots of $\mathfrak{e}_{6,\mathbb{C}}$.
From the subroot systems we see that $r_\mathbb{C}$ maps $H_3$ to $H'_2+2H'_3+2H'_4+H'_5+H'_6$, $H_1$ to $H'_1$ and $H_2$ to $H'_2$.
Let $H=\left(\begin{array}{cc}
1&0\\
0&-1
\end{array}\right)$,
we get 
\begin{align*}
\rho_\mathbb{C}(H)&=r_\mathbb{C}\circ h_\mathbb{C}\circ\rho'_\mathbb{C}(H)=r_\mathbb{C}(3H_3+2H_1-H_2)\\
&=2H'_1+2H'_2+6H'_3+6H'_4+3H'_5+3H'_6.
\end{align*} 
To calculate the weighted Dynkin diagram of $\rho_\mathbb{C}$ we first calculate all $\alpha_i(\rho_\mathbb{C}(H))$ and then apply reflections from the Weyl group of $\mathfrak{e}_{6,\mathbb{C}}$ until all entries are positive.

\begin{align*}
&\begin{picture}(50,47)
\multiput(5,31)(30,0){5}{\circle{5}}
\multiputlist(20,31)(30,0)%
{{\line(10,0){25}},{\line(10,0){25}},{\line(10,0){25}},{\line(10,0){25}}}
\multiputlist(5,41)(30,0){$2$,$-4$,$1$,$3$,$0$}
\put(65,1){\circle{5}}
\put(65,4){\line(0,10){25}}
\put(70,0){$0$}
\end{picture} 
\hspace{3cm} \stackrel{s_{\alpha_2}}{\longmapsto} 
&\begin{picture}(50,47)
\multiput(5,31)(30,0){5}{\circle{5}}
\multiputlist(20,31)(30,0)%
{{\line(10,0){25}},{\line(10,0){25}},{\line(10,0){25}},{\line(10,0){25}}}
\multiputlist(5,41)(30,0){$-2$,$4$,$-3$,$3$,$0$}
\put(65,1){\circle{5}}
\put(65,4){\line(0,10){25}}
\put(70,0){$0$}
\end{picture}  
\hspace{2cm}\stackrel{s_{\alpha_1}\circ s_{\alpha_3}}{\longmapsto}\\
&\begin{picture}(50,47)
\multiput(5,31)(30,0){5}{\circle{5}}
\multiputlist(20,31)(30,0)%
{{\line(10,0){25}},{\line(10,0){25}},{\line(10,0){25}},{\line(10,0){25}}}
\multiputlist(5,41)(30,0){$2$,$-1$,$3$,$0$,$0$}
\put(65,1){\circle{5}}
\put(65,4){\line(0,10){25}}
\put(70,0){$-3$}
\end{picture}  
\hspace{3cm} \stackrel{s_{\alpha_2}\circ s_{\alpha_6}}{\longmapsto}
&\begin{picture}(50,47)
\multiput(5,31)(30,0){5}{\circle{5}}
\multiputlist(20,31)(30,0)%
{{\line(10,0){25}},{\line(10,0){25}},{\line(10,0){25}},{\line(10,0){25}}}
\multiputlist(5,41)(30,0){$1$,$1$,$-1$,$0$,$0$}
\put(65,1){\circle{5}}
\put(65,4){\line(0,10){25}}
\put(70,0){$3$}
\end{picture}  
\hspace{2cm}\stackrel{s_{\alpha_3}}{\longmapsto}\\
&\begin{picture}(50,47)
\multiput(5,31)(30,0){5}{\circle{5}}
\multiputlist(20,31)(30,0)%
{{\line(10,0){25}},{\line(10,0){25}},{\line(10,0){25}},{\line(10,0){25}}}
\multiputlist(5,41)(30,0){$1$,$0$,$1$,$-1$,$0$}
\put(65,1){\circle{5}}
\put(65,4){\line(0,10){25}}
\put(70,0){$2$}
\end{picture}  
\hspace{3cm} \stackrel{s_{\alpha_4}}{\longmapsto}
&\begin{picture}(50,47)
\multiput(5,31)(30,0){5}{\circle{5}}
\multiputlist(20,31)(30,0)%
{{\line(10,0){25}},{\line(10,0){25}},{\line(10,0){25}},{\line(10,0){25}}}
\multiputlist(5,41)(30,0){$1$,$0$,$0$,$1$,$-1$}
\put(65,1){\circle{5}}
\put(65,4){\line(0,10){25}}
\put(70,0){$2$}
\end{picture}  
\hspace{2cm}\stackrel{s_{\alpha_5}}{\longmapsto}\\
&\begin{picture}(50,47)
\multiput(5,31)(30,0){5}{\circle{5}}
\multiputlist(20,31)(30,0)%
{{\line(10,0){25}},{\line(10,0){25}},{\line(10,0){25}},{\line(10,0){25}}}
\multiputlist(5,41)(30,0){$1$,$0$,$0$,$0$,$1$}
\put(65,1){\circle{5}}
\put(65,4){\line(0,10){25}}
\put(70,0){$2$}
\end{picture}
\end{align*}
\end{proof}

\begin{proof}[Proof of Lemma \ref{mainlemma2}]
We fix the following notation. Let $\mathfrak{g}'=\mathfrak{sp}(4,\mathbb{R})$ or $\mathfrak{sp}(6,\mathbb{R})$ and $\mathfrak{g}=\mathfrak{e}_{7(-25)}$ with Cartan decompositions $\mathfrak{g}'=\mathfrak{k}'+\mathfrak{p}'$ and  $\mathfrak{g}=\mathfrak{k}+\mathfrak{p}$.
We have complexifications $\mathfrak{g}'_\mathbb{C }$ and $\mathfrak{g}_\mathbb{C }$ and compact real forms
$\mathfrak{u}'=\mathfrak{k}'+i\mathfrak{p}'$ and  $\mathfrak{u}=\mathfrak{k}+i\mathfrak{p}$.
We denote by $\theta',\theta$ conjugation of $\mathfrak{g}'_\mathbb{C}$, $\mathfrak{g}_\mathbb{C}$ with respect to $\mathfrak{g}'$, $\mathfrak{g}$ and by $\tau'$, $\tau$ conjugation with respect to the compact real forms.
We assume that $\rho\colon \mathfrak{g}'\rightarrow\mathfrak{g}$ respects Cartan decompositions, this is equivalent to that the complexified homomorphism $\rho_\mathbb{C}$ satisfies $\rho_\mathbb{C}\circ\tau'=\tau\circ\rho_\mathbb{C}$ and $\rho_\mathbb{C}\circ\theta'=\theta\circ\rho_\mathbb{C}$.
An important fact we will need is that for $\mathfrak{g}'_\mathbb{C}=\mathfrak{sp}(4,\mathbb{C})$ or $\mathfrak{sp}(6,\mathbb{C})$
and any $\rho_\mathbb{C}\colon\mathfrak{g}'_\mathbb{C}\rightarrow\mathfrak{e}_{7,\mathbb{C}}$ the centralizer of $\rho_\mathbb{C}(\mathfrak{g}'_\mathbb{C})$ in $\mathfrak{e}_{7,\mathbb{C}}$ has dimension at least four, \cite[ pp.~197,201]{C9}.

Let $\mathfrak{z}_\mathbb{C}$ denote the centralizer of $\rho_\mathbb{C}(\mathfrak{g}'_\mathbb{C})\subset\mathfrak{g}_\mathbb{C}$. Given $X\in\mathfrak{z}_\mathbb{C}$, we have
\begin{equation*}
[\theta(X),\rho(Y)]=\theta([X,\theta(\rho(Y))])=\theta([X,\rho(\theta'(Y))])=0
\end{equation*} 
for all $\rho(Y)\in\rho(\mathfrak{g}'_\mathbb{C})$, i.e. $\theta(\mathfrak{z}_\mathbb{C})=\mathfrak{z}_\mathbb{C}$.
The same calculation goes through with $\theta$ replaced by $\tau$. Let $\mathfrak{z}:=\{ Z+\theta(Z) :\, Z\in\mathfrak{z}_\mathbb{C}\}$, then $\mathfrak{z}$ is a subalgebra of $\mathfrak{g}$ and $\mbox{dim}_\mathbb{R}(\mathfrak{z})=\mbox{dim}_\mathbb{C}(\mathfrak{z}_\mathbb{C})$.

We claim that $\mathfrak{z}$ can not be contained in $\mathfrak{p}$.
We know that $\mathfrak{z}$ is a subalgebra of $\mathfrak{g}$ and that the only subalgebras of $\mathfrak{p}$ are abelian ones.
If $\mathfrak{z}$ was contained in $\mathfrak{p}$ it would be an abelian subalgebra of dimension at least four which contradicts that 
the real rank of $\mathfrak{e}_{7(-25)}$ is three.

Take a nonzero element $X\in\mathfrak{z}$ that does not lie in $\mathfrak{p}$. Then $H_0:=X+(\theta \tau)(X)$ lies in $\mathfrak{k}$ and is nonzero. Let $\mathfrak{h}$ be a maximal abelian subalgebra of $\mathfrak{k}$ containing $H_0$.
We want to construct a minimal regular subalgebra of $\mathfrak{g}_\mathbb{C}$ with respect to $\mathfrak{h}_\mathbb{C}$ that contains $\rho(\mathfrak{g}'_\mathbb{C})$. Since $[\mathfrak{p}'_\mathbb{C},\mathfrak{p}'_\mathbb{C}]=\mathfrak{k}'_\mathbb{C}$ it is sufficient to find a regular subalgebra containing $\rho(\mathfrak{p}'_\mathbb{C})$.

Let $\mathfrak{g}_\mathbb{C}= \mathfrak{h}_\mathbb{C}+\sum_{\alpha\in\Delta}{\mathfrak{g}_\alpha}$ 
be the root space decomposition of $\mathfrak{g}_\mathbb{C}$ with respect to $\mathfrak{h}_\mathbb{C}$.
We have that 
$\rho(\mathfrak{p}'_\mathbb{C})=\mbox{span}_\mathbb{C}\{Y_1,...,Y_m\}$ 
for some $Y_i$. Since $\rho(\mathfrak{p}'_\mathbb{C})\subset\mathfrak{p}_\mathbb{C}\subset\sum{\mathfrak{g}_\alpha}$, the $Y_i$ are of the form  $Y_i=\sum_{\alpha\in\Delta_i}{c_{i\alpha}X_\alpha}$, where $X_\alpha\in\mathfrak{g}_\alpha$ and the subsets $\Delta_i$ are chosen such that all $c_{i\alpha}\neq 0$.

Let $\Delta''=\cup_i \Delta_i$ and 
$\Delta'=\mbox{span}_\mathbb{Z}(\Delta'')\cap\Delta$. 
Then $\Delta'$ is a subroot system of $\Delta$. The regular subalgebras $\mathfrak{g}_\mathbb{C}(\Delta')$ respectively $\mathfrak{g}(\Delta')$ contain $\rho(\mathfrak{g}_\mathbb{C})$ respectively $\rho(\mathfrak{g})$.

Since $0=[H_0,Y_i]=\sum_{\alpha\in\Delta_i}{c_{i\alpha}\alpha(H_0)X_\alpha}$ for all $i$ we have that $\alpha(H_0)=0$ for all $\alpha\in\Delta'$.
Hence $[H_0,\mathfrak{g}_\mathbb{C}(\Delta')]=0$. Since $\mathfrak{g}(\Delta')$ is semisimple this implies that $H_0\not\in\mathfrak{g}_\mathbb{C}(\Delta')$. Hence $\mathfrak{g}(\Delta')$  is a proper regular Hermitian subalgebra 
of $\mathfrak{g}$ containing $\rho(\mathfrak{g}')$. 

It remains to prove that $\mathfrak{g}(\Delta')$ is simple. 
Let $\mathfrak{g}(\Delta')=\oplus\mathfrak{g}_i$ be a decomposition into simple Lie algebras. Since $\mathfrak{g}(\Delta')$ is the smallest Hermitian regular subalgebra containing $\rho(\mathfrak{g}')$, $\rho$ is a sum of injective homomorphisms $\rho_i\colon\mathfrak{g}'\rightarrow\mathfrak{g}_i$. This implies that $\mbox{rank}(\mathfrak{g}_i)\geq\mbox{rank}(\mathfrak{g}')\geq 2$. On the other hand, since $\mathfrak{g}(\Delta')$ is a subalgebra of $\mathfrak{e}_{7(-25)}$ we have that $\sum{\mbox{rank}(\mathfrak{g}_i)}\leq 3$. 
The only way both of these inequalities are satisfied is if $\mathfrak{g}(\Delta')$ is simple.
\end{proof}

\begin{proof}[Proof of Theorem \ref{main3}]
Let us first consider possible tight nonholomorphic homomorphisms $\rho\colon\mathfrak{g}\rightarrow\mathfrak{e}_{6(-14)}$. That  $\mathfrak{g}$ is of rank one and not isomorphic to $\mathfrak{su}(1,1)$ implies that $\mathfrak{g}$ is isomorphic to $\mathfrak{su}(n,1)$, $n\geq 2$.

Let $\iota\colon\mathfrak{su}(1,1)\rightarrow\mathfrak{su}(n,1)$ be the tight and holomorphic homomorphism
\begin{equation*}
\left(\begin{array}{cc}
ai&z\\
\bar{z}&-ai
\end{array}\right)\mapsto
\left(\begin{array}{cccc}
0&0&0&0\\
0&ai&0&z\\
0&0&0&0\\
0&\bar{z}&0&-ai
\end{array}\right),
\end{equation*}
where the zeros in the first row and column are block matrices. We get the complexification
$\iota_\mathbb{C}\colon\mathfrak{sl}(2,\mathbb{C})\rightarrow\mathfrak{sl}(n+1,\mathbb{C})$,
\begin{equation*}
\left(\begin{array}{cc}
a&b\\
c&-a
\end{array}\right)\mapsto
\left(\begin{array}{cccc}
0&0&0&0\\
0&a&0&b\\
0&0&0&0\\
0&c&0&-a
\end{array}\right)
\end{equation*}
We also define the homomorphism $\phi_\mathbb{C}\colon\mathfrak{sl}(2,\mathbb{C})\rightarrow\mathfrak{sl}(n+1,\mathbb{C})$, 
\begin{equation*}
\left(\begin{array}{cc}
a&b\\
c&-a
\end{array}\right)\mapsto
\left(\begin{array}{cccc}
0&0&0&0\\
0&2a&b&0\\
0&2c&0&2b\\
0&0&c&-2a
\end{array}\right).
\end{equation*}

Suppose $\rho\colon\mathfrak{su}(n,1)\rightarrow\mathfrak{e}_{6(-14)}$ is tight and nonholomorphic. 
By Lemma \ref{compose} the composition $\rho\circ\iota$ is tight, and as a composition of a holomorphic map and a nonholomorphic map it is nonholomorphic. Hence by Lemma \ref{mainlemma1} $\rho_\mathbb{C}\circ\iota_\mathbb{C}$ has weighted Dynkin diagram: 

\begin{picture}(50,47)
\multiput(5,31)(30,0){5}{\circle{5}}
\multiputlist(20,31)(30,0)%
{{\line(10,0){25}},{\line(10,0){25}},{\line(10,0){25}},{\line(10,0){25}}}
\multiputlist(5,41)(30,0){$1$,$0$,$0$,$0$,$1$}
\put(65,1){\circle{5}}
\put(65,4){\line(0,10){25}}
\put(70,0){$2$}
\end{picture}

Since $\phi_\mathbb{C}(H)=2\iota_\mathbb{C}(H)$ the entries of the weighted Dynkin diagram of $\rho_\mathbb{C}\circ\phi_\mathbb{C}$ are two times those of $\rho_\mathbb{C}\circ\iota_\mathbb{C}$.  Hence  $\rho_\mathbb{C}\circ\phi_\mathbb{C}$ has weighted Dynkin diagram:

\begin{picture}(50,47)
\multiput(5,31)(30,0){5}{\circle{5}}
\multiputlist(20,31)(30,0)%
{{\line(10,0){25}},{\line(10,0){25}},{\line(10,0){25}},{\line(10,0){25}}}
\multiputlist(5,41)(30,0){$2$,$0$,$0$,$0$,$2$}
\put(65,1){\circle{5}}
\put(65,4){\line(0,10){25}}
\put(70,0){$4$}
\end{picture}

But by Theorem \ref{wdd} all entries of a weighted Dynkin diagram belongs to the set $\{0,1,2\}$. This is a contradiction, hence a tight nonholomorphic homomorphism $\rho\colon\mathfrak{su}(n,1)\rightarrow\mathfrak{e}_{6(-14)}$ can not exist.

 Next we assume that $\rho\colon\mathfrak{g}\rightarrow\mathfrak{e}_{7(-25)}$ is a nonholomorphic tight homomorphisms, $\mathfrak{g}\neq \mathfrak{su}(1,1)$. Since the real rank of $\mathfrak{e}_{7(-25)}$ is three we can assume that $\mbox{rank}(\mathfrak{g})\leq 3$. By Lemma \ref{tube3} it must be of tube type. This leaves us with seven possibilities: $\mathfrak{g}=\mathfrak{su}(2,2)$, $\mathfrak{su}(3,3)$, $\mathfrak{so}^*(8)$, $\mathfrak{so}^*(12)$, $\mathfrak{sp}(4,\mathbb{R})$, $\mathfrak{sp}(6,\mathbb{R})$ or $\mathfrak{so}(n,2)$, $n\geq 3$.
To each of these $\mathfrak{g}$ there exists a tight and holomorphic homomorphism $\iota\colon\mathfrak{sp}(4,\mathbb{R})\rightarrow\mathfrak{g}$ or $\iota\colon\mathfrak{sp}(6,\mathbb{R})\rightarrow\mathfrak{g}$, \cite{C31}. By Lemma \ref{mainlemma2} the image of $\rho\circ\iota$ is contained in a simple proper Hermitian regular subalgebra $\mathfrak{g}'\subset\mathfrak{e}_{7(-25)}$.
We thus get the following commutative diagram:

\centerline{\xymatrix{
\mathfrak{g}\ar[r]^\rho &\mathfrak{e}_{7(-25)} \\
\mathfrak{sp}(4/6,\mathbb{R})\ar[u]^{\iota}\ar[r]^{\rho|} &\mathfrak{g}'\ar[u]_{r} 
}}

Since $\rho$ and $\iota$ are tight Lemma \ref{compose} implies that $\rho\circ\iota=r\circ\rho|$ is tight. Applying the lemma again implies that $r$ and $\rho|$ are tight. By assumption $\rho$ is nonholomorphic and $\iota$ is holomorphic. Hence $\rho\circ\iota=r\circ \rho|$ is nonholomorphic. The inclusion homomorphism of the Hermitian regular subalgebra, $r$, is holomorphic, hence $\rho|$ must be nonholomorphic. Since $\mathfrak{g}'$ is a simple proper Hermitian subalgebra of $\mathfrak{e}_{7(-25)}$ it must either be isomorphic to a classical Hermitian Lie algebra or $\mathfrak{e}_{6(-14)}$. Summarizing, $\rho|\colon\mathfrak{sp}(4/6,\mathbb{R})\rightarrow\mathfrak{g}'$ is a nonholomorphic tight homomorphism where $\mathfrak{g}'$ is classical or equal to $\mathfrak{e}_{6(-14)}$. This contradicts Theorem \ref{oldmain}, hence there can not exist a tight and nonholomorphic homomorphism $\rho\colon\mathfrak{g}\rightarrow\mathfrak{e}_{7(-25)}$.
\end{proof}

\section{Appendix}

For the readers convenience we list here the Dynkin diagrams and Hermitian regular subalgebras of relevant Lie algebras. We follow the convention in \cite{C6} and let $\alpha_1$ denote the noncompact simple root. Together with each subalgebra is listed the $\Pi$-system defining it. In all root systems appearing below the roots are all of the same length. Hence, subalgebras $\mathfrak{g}'\subset\mathfrak{g}$ are tightly included if and only if $\mbox{rank}(\mathfrak{g}')=\mbox{rank}(\mathfrak{g}')$ by  \cite[Theorem 4.1]{C31}. \\

{\bf Maximal Hermitian regular subalgebras of $\mathfrak{su}(p,q)$}\\

\begin{picture}(50,43)
\multiput(5,31)(30,0){7}{\circle{5}}
\multiputlist(20,31)(30,0)%
{{\line(10,0){25}},{$\cdots$},{\line(10,0){25}},{\line(10,0){25}},{$\cdots$},{\line(10,0){25}}}
\multiputlist(5,41)(30,0){$\alpha_{q+1}$,$\alpha_{q+2}$,$\alpha_{q+p-1}$,$\alpha_{1}$,$\alpha_{2}$,$\alpha_{q-1}$,$\alpha_{q}$}
\end{picture}
\begin{align*}
&\gamma=\alpha_1+...+\alpha_{p+q-1}
\end{align*}
\begin{align*}
&\mathfrak{su}(l,q) ,\,1\leq l <p,\\
& \{\alpha_{p+q-l},\,\alpha_{p+q-l+1},...,\alpha_{p+q-1},\alpha_1,\alpha_2,...,\alpha_q\}\nonumber\\
&\mathfrak{su}(p,s) ,\,p\leq s <q,  \\
&\{\alpha_{q+1},\,\alpha_{q+2},...,\alpha_{p+q-1},\alpha_1,\alpha_2,...,\alpha_s\}\nonumber\\
&\mathfrak{su}(s,p),\, 1\leq s<p, \, \\
&\{\alpha_{s},\alpha_{s-1},...,\alpha_{2},\alpha_1,\alpha_{p+q-1},\alpha_{p+q-2},...,\alpha_{q+1}\}\nonumber
\end{align*}
\begin{align*}
&\mathfrak{su}(l,s)+\mathfrak{su}(p-l,q-s),\, 1\leq l\leq s, \, p-l\leq q-s, \,   \\
&\begin{array}{l}  \{\alpha_{p+q-l},\alpha_{p+q-l+1},...,\alpha_{p+q-2},\alpha_1,\alpha_2,...,\alpha_s\}\cup\\
 \{-\alpha_{p+q-l-2},...,-\alpha_{q+1},\gamma,-\alpha_q,...,-\alpha_{s+2}\}
\end{array}\nonumber\\
&\mathfrak{su}(s,l)+\mathfrak{su}(p-l,q-s),\, 1\leq s<l <p, \,\\
&\begin{array}{l}  \{\alpha_{s},\alpha_{s-1},...,\alpha_1,\alpha_{p+q-1},...,\alpha_{p+q-l}\}\cup\\
 \{-\alpha_{p+q-l-2},...,-\alpha_{q+1},\gamma,-\alpha_q,...,-\alpha_{s+2}\}\nonumber
\end{array}
\end{align*}

{\bf Maximal Hermitian regular subalgebras of $\mathfrak{so}^*(2p)$}\\

\begin{picture}(50,43)
\multiput(5,31)(30,0){5}{\circle{5}}
\multiputlist(20,31)(30,0)%
{{\line(10,0){25}},{\line(10,0){25}},{$\cdots$},{\line(10,0){25}}}
\multiputlist(5,41)(30,0){$\alpha_{1}$,$\alpha_{2}$,$\alpha_{3}$,$\alpha_{p-2}$,$\alpha_{p-1}$}
\put(35,0){\circle{5}}
\put(34,4){\line(0,10){25}}
\put(20,0){$\alpha_p$}
\end{picture}
\begin{align*}
&\gamma=\alpha_1+2(\alpha_2+...+\alpha_{p-2})+\alpha_{p-1}+\alpha_p \\
&\beta=\alpha_2+ \alpha_3+...+\alpha_p
\end{align*}
\begin{align*}
&\mathfrak{su}(l,p-l) ,\, 1\leq l\leq p/2\, ,\\
&\{-\alpha_{p-l+2},...,-\alpha_{p-2},-\alpha_{p-1},\beta,\alpha_1,...,\alpha_{p-l}\}\nonumber\\
&\mathfrak{so}^*(2l) + \mathfrak{so}^*(2(p-l)) ,\, [p/2]\leq l \leq p-2  \, ,  \\
&\Big\{\begin{array}{l}
\alpha_1,...,\alpha_{l-1}\\
\alpha_p
\end{array}\Big\}
\cup 
\Big\{\begin{array}{l}
\gamma,-\alpha_{p-2},-\alpha_{p-3}...,-\alpha_{l+1}\\
-\alpha_{p-1}
\end{array}\Big\}\nonumber\\
&\mathfrak{so}^*(2(p-1)), \,
\Big\{\begin{array}{l}
 \alpha_1,...,\alpha_{p-2}\\
\alpha_p
\end{array}\Big\}\
\end{align*}

{\bf Maximal Hermitian regular subalgebras of $\mathfrak{so}(p,2)$,  $p= 2k-2$ even} \\

\begin{picture}(50,37)
\multiput(5,31)(30,0){5}{\circle{5}}
\multiputlist(20,31)(30,0)%
{{\line(10,0){25}},{\line(10,0){25}},{$\cdots$},{\line(10,0){25}}}
\multiputlist(5,41)(30,0){$\alpha_{1}$,$\alpha_{2}$,$\alpha_{3}$,$\alpha_{k-2}$,$\alpha_{k-1}$}
\put(94,1){\circle{5}}
\put(94,4){\line(0,10){25}}
\put(100,0){$\alpha_k$}
\end{picture}
\begin{align*}
&\gamma=\alpha_1+2(\alpha_2+...+\alpha_{k-2})+\alpha_{k-1}+\alpha_k\\
&\beta_1=\alpha_2+2(\alpha_3+...+\alpha_{k-2})+\alpha_{k-1}+\alpha_k\\
&\beta_2=\alpha_{k-2}+\alpha_{k-1}+\alpha_k
\end{align*}
\begin{align*}
&\mathfrak{su}(1,1)+ \mathfrak{su}(1,1),\, \{\alpha_1\}\cup\{\gamma\}\\
&\mathfrak{su}(1,k-1), \, \{\alpha_1,...,\alpha_{k-2},\alpha_k\}\\
&\mathfrak{su}(2,2),\, \{\beta_1,\alpha_1,\alpha_2\}\\
&\mathfrak{so}(p-2,2),\, \Big\{\begin{array}{l}
\alpha_1,...,\alpha_{k-2}\\
\beta_2
\end{array}\Big\}
\end{align*}

{\bf Maximal Hermitian regular subalgebras of $\mathfrak{e}_{6(-14)}$}\\

\begin{picture}(50,37)
\multiput(5,31)(30,0){5}{\circle{5}}
\multiputlist(20,31)(30,0)%
{{\line(10,0){25}},{\line(10,0){25}},{\line(10,0){25}},{\line(10,0){25}}}
\multiputlist(5,41)(30,0){$\alpha_{1}$,$\alpha_{2}$,$\alpha_{3}$,$\alpha_{4}$,$\alpha_{5}$}
\put(65,1){\circle{5}}
\put(65,4){\line(0,10){25}}
\put(70,0){$\alpha_6$}
\end{picture}
\begin{align*}
&\gamma=\alpha_1+2\alpha_2+3\alpha_3+2\alpha_4+\alpha_5+2\alpha_6\\
&\beta_1=\alpha_2+2\alpha_3+2\alpha_4+\alpha_5+\alpha_6\\
&\beta_2=\alpha_3+\alpha_4+\alpha_5+\alpha_6
\end{align*}
\begin{align*}
&\mathfrak{su}(1,5)+\mathfrak{su}(1,1), \, \{\alpha_1,...,\alpha_5\}\cup\{\gamma\}\\
&\mathfrak{su}(1,2)+\mathfrak{su}(1,2), \, \{\alpha_1,\alpha_2\}\cup\{\gamma,-\alpha_6\}\\
&\mathfrak{su}(2,4),\, \{\beta_1,\alpha_1,\alpha_2,\alpha_3,\alpha_6\}\\
&\mathfrak{so}^*(10),\, \Big\{\begin{array}{l}
\alpha_1,\alpha_2,\alpha_3,\alpha_4\\
\beta_2
\end{array}\Big\}\\
&\mathfrak{so}(8,2),\, \Big\{\begin{array}{l}
\alpha_1,\alpha_2,\alpha_3,\alpha_4\\
\alpha_6
\end{array}\Big\}
\end{align*}

{\bf Maximal Hermitian regular subalgebras $\mathfrak{g}$ of $\mathfrak{g}^{'}=e_{7(-25)}$}\\

\begin{picture}(50,37)
\multiput(5,31)(30,0){6}{\circle{5}}
\multiputlist(20,31)(30,0)%
{{\line(10,0){25}},{\line(10,0){25}},{\line(10,0){25}},{\line(10,0){25}},{\line(10,0){25}}}
\multiputlist(5,41)(30,0){$\alpha_{1}$,$\alpha_{2}$,$\alpha_{3}$,$\alpha_{4}$,$\alpha_{5}$,$\alpha_{6}$}
\put(95,1){\circle{5}}
\put(94,4){\line(0,10){25}}
\put(80,0){$\alpha_7$}
\end{picture}
\\
\\
\begin{align*}
&\gamma=\alpha_1 + 2\alpha_2 + 3\alpha_3 + 4\alpha_4 + 3\alpha_5 + 2\alpha_6 + 2\alpha_7\\
&\beta_1=\alpha_2 +2\alpha_3 +3\alpha_4+ 2\alpha_5 +\alpha_6 +2\alpha_7\\
&\beta_2=\alpha_3 + 2\alpha_4 + 2\alpha_5 + \alpha_6 + \alpha_7\\
&\beta_3=\alpha_4 + \alpha_5 + \alpha_6 + \alpha_7  
\end{align*}
\begin{align*}
&su(1,5)+su(1,2), \, \{\alpha_1,...,\alpha_4,\alpha_7\}\cup\{\gamma,-\alpha_6\}\\
&su(1,3)+su(1,3), \, \{\alpha_1,\alpha_2,\alpha_3\}\cup\{\gamma,-\alpha_6\,-\alpha_5\}\\
&su(2,6), \, \{\beta_1,\alpha_1,\alpha_2,...,\alpha_6\}\\
&su(3,3), \, \{-\alpha_7,\beta_1,\alpha_1,\alpha_2,\alpha_3\}\\
&so^*(12),\, \Big\{\begin{array}{l}
\alpha_1,\alpha_2,\alpha_3,\alpha_4,\alpha_7\\
\beta_2
\end{array}\Big\}\\
&so(10,2)+su(1,1), \,
\Big\{\begin{array}{l}
\alpha_1,\alpha_2,\alpha_3,\alpha_4,\alpha_5\\
\alpha_7
\end{array}\Big\}
\cup\{\gamma\}\\
&e_{6(-14)},\, \Big\{\begin{array}{l}
\alpha_1,\alpha_3,\alpha_4,\alpha_5,\alpha_5\\
\beta_3
 \end{array}\Big\}
\end{align*}

\renewcommand{\bibname}{{\sc References}}
\begingroup
\let\chapter\section

\begin{bibdiv}
\begin{biblist}

\bib{C10}{article}{
   author={Burger, Marc},
   author={Iozzi, Alessandra},
   author={Wienhard, Anna},
   title={Surface group representations with maximal Toledo invariant},
   journal={Ann. of Math. (2)},
   volume={172},
   date={2010},
   number={1},
   pages={517--566},
   issn={0003-486X},
}

\bib{C8}{article}{
   author={Burger, Marc},
   author={Iozzi, Alessandra},
   author={Wienhard, Anna},
   title={Tight homomorphisms and Hermitian symmetric spaces},
   journal={Geom. Funct. Anal.},
   volume={19},
   date={2009},
   number={3},
   pages={678--721},
   issn={1016-443X},
}

\bib{C9}{article}{
   author={Dynkin, E. B.},
   title={Semisimple subalgebras of semisimple Lie algebras},
   journal={Am. Math. Soc. Transl. Ser. II, AMS},
   volume={6},
   date={1957},
   pages={111--243},
}

\bib{C31}{article}{
   author={Hamlet, Oskar},
   title={Tight holomorphic maps, a classification},
   journal={J. Lie Theory},
   volume={23},
   date={2013},
   number={3},
   pages={639--654},
   issn={0949-5932},
}

\bib{C32}{article}{
   author={Hamlet, Oskar},
   title={Tight maps and holomorphicity},
   journal={Transformation Groups},
   status={to appear},
}

\bib{C3}{book}{
   author={Helgason, Sigurdur},
   title={Differential geometry, Lie groups, and symmetric spaces},
   series={Graduate Studies in Mathematics},
   volume={34},
   publisher={American Mathematical Society},
   place={Providence, RI},
   date={2001},
   pages={xxvi+641},
}

\bib{C6}{article}{
   author={Ihara, Shin-Ichiro},
   title={Holomorphic imbeddings of symmetric domains},
   journal={J. Math. Soc. Japan},
   volume={19},
   date={1967},
   pages={261--302},
   issn={0025-5645},
}

\bib{C7}{article}{
   author={Satake, Ichir{\^o}},
   title={Holomorphic imbeddings of symmetric domains into a Siegel space},
   journal={Amer. J. Math.},
   volume={87},
   date={1965},
   pages={425--461},
   issn={0002-9327},
}

\end{biblist}
\end{bibdiv}

\endgroup

\ifx\thesis\undefined
\end{document}

\fi